\documentclass[11pt]{amsart}

\usepackage[centertags]{amsmath}
\usepackage{amsfonts}
\usepackage{amssymb}
\usepackage{amsthm}
\usepackage[english]{babel}
\usepackage[active]{srcltx}
\usepackage[pdftex]{graphicx} % PDFLaTeX
\DeclareGraphicsExtensions{.png,.pdf,.jpg}

\setlength{\textheight}{21.5cm} \setlength{\textwidth}{14cm}
\setlength{\parindent}{16pt} \setlength{\leftmargin}{2cm}
\setlength{\hoffset}{-1.3cm}

\newcommand{\dist}{\operatorname{dist}}

\newcommand{\Real}{\mathbb{R}}

\newcommand{\Lip}{\operatorname{Lip}}
\newcommand{\supp}{\operatorname{supp}}

\newtheorem{thm}{Theorem}[section]
\newtheorem{cor}[thm]{Corollary}
\newtheorem{lem}[thm]{Lemma}
\newtheorem{prop}[thm]{Proposition}

\newtheorem{rem}[thm]{Remark}

\newtheorem{defn}[thm]{Definition}

\numberwithin{equation}{section}

\begin{document}

\title[Approximation and Extension on Banach manifolds]{On some problems on smooth approximation and smooth extension
of Lipschitz functions on Banach-Finsler Manifolds}

\author{M. Jim{\'e}nez-Sevilla and L. S\'anchez-Gonz\'alez}

\address{Departamento de An{\'a}lisis Matem{\'a}tico\\ Facultad de
Matem{\'a}ticas\\ Universidad Complutense\\ 28040 Madrid, Spain}

\thanks{Supported in part by DGES (Spain) Project MTM2009-07848. L. S\'anchez-Gonz\'alez has  also been supported by grant MEC AP2007-00868}

\email{marjim@mat.ucm.es,
lfsanche@mat.ucm.es}

\keywords{Riemannian manifolds, Finsler manifolds, Banach spaces, smooth approximation of Lipschitz functions}

\subjclass[2010]{58B10, 46T05, 46T20, 46B26, 46B20. }

\date{December, 2010}

\maketitle

\begin{abstract}
Let us consider a Riemannian manifold $M$ (either separable or non-separable). We prove  that, for  every $\varepsilon>0$,
every  Lipschitz function  $f:M\rightarrow \mathbb R$ can be uniformly approximated  by a Lipschitz, $C^1$-smooth function $g$ with $\Lip(g)\le  \Lip(f)+\varepsilon$.  As a consequence, every Riemannian  manifold is uniformly bumpable. These  results  extend  to the non-separable setting those given  in \cite{AzFeMeRa} for separable Riemannian manifolds.
The results are presented in the context of $C^\ell$ Finsler manifolds modeled on Banach spaces. Sufficient conditions are given
on the Finsler manifold $M$ (and the Banach space $X$ where $M$  is modeled), so that every Lipschitz function  $f:M\rightarrow \mathbb R$ can be uniformly approximated by a Lipschitz, $C^k$-smooth function $g$ with $\Lip(g)\le C \Lip(f)$ (for some $C$ depending only on $X$). Some applications of these results are also given as well as a characterization, on the separable case, of the class of $C^\ell$ Finsler manifolds satisfying the above property of approximation. Finally, we  give  sufficient conditions on the  $C^1$ Finsler manifold $M$ and $X$, to ensure the existence of Lipschitz and $C^1$-smooth extensions
of every real-valued function $f$ defined on a submanifold $N$ of  $M$ provided $f$ is $C^1$-smooth on $N$ and Lipschitz with the
metric induced by $M$. %This extends  the results given in \cite{Azafrykeener} and \cite{MarLuis}.
\end{abstract}

%%%%%%%%%%%%%%

%%%%%%%%%%%%%%%%%%%%%%%%%%%%
%%%%%%%%%%%%%%%%%%%%%%
%%%%%%%%%%%%%%%%%%%%%%%%%%%
\section{Introduction}
In this work we address the problem whether every Lipschitz function $f:M\rightarrow \mathbb R$ defined on a {\em non-separable} Riemannian
manifold  can be uniformly approximated by  a Lipschitz, $C^\infty$-smooth function $g:M\rightarrow \mathbb R$.
The study of this problem was motivated  by the work  in \cite{AzFeMeRa},
where this result of approximation is stated for   {\em separable}
Riemannian manifolds. The question whether this result holds for every Riemannian manifold is posed in  \cite{AzFeMe}, \cite{AzFeMeRa} and \cite{GaJaRa}.
 A positive answer to this question provides nice applications such as: (i)  the uniformly bumpable character of every Riemannian manifold,  (ii) Deville-Godefroy-Zizler smooth variational principle  holds for every complete  Riemannian manifold \cite{AzFeMe} and (iii) the infinite-dimensional version of the Myers-Nakai theorem given in \cite{GaJaRa}   holds for every complete Riemannian manifold
 (either separable or non separable) as well.

The problem of the  uniform approximation of  Lipschitz functions defined
on a Banach space by Lipschitz and $C^1$-smooth functions  has been largely studied. J. M. Lasry and P. L. Lions used sup-inf convolution techniques to answer positively to this problem on every Hilbert space  \cite{LL}. R. Fry introduced
``sup-partitions of unity'', a key tool  to answer positively to this problem for the case of bounded and  Lipschitz functions defined on
 Banach spaces with separable dual \cite{Fry1}. Later on,  D. Azagra, R. Fry and V. Montesinos extended this result in \cite{azafrymon}. In particular,
 they obtained $C^k$ smoothness of the Lipschitz approximating functions whenever  $X$ is separable and admits a Lipschitz $C^{k}$-smooth bump function.
 Recently, it has been shown that this kind of approximation holds for every real-valued Lipschitz function defined on  a Banach space with separable dual \cite{HajekJohanis} (see also  \cite[Lemma 1]{Azafrykeener}),    on $c_0(\Gamma)$ (for every non-empty set of indexes $\Gamma$) \cite{HajJohc0} and on a larger class of non-separable Banach spaces \cite{HajekJohanis}. Moreover, they obtain an upper bound (that only depends on $X$) of the ratio between the Lipschitz constant of the constructed smooth functions that uniformly approximate to a function $f$ and the Lipschitz constant of $f$.

 We study the problem in the context of smooth Banach-Finsler manifolds. Our aim is to study sufficient conditions on a smooth
 Banach-Finsler manifold $M$  so that  the above result on uniform approximation of  Lipschitz functions defined on $M$ holds. We consider the setting of Banach-Finsler manifolds so that we can obtain a unified approach to this problem for both
 Riemannian and non-Riemannian manifolds, such as any submanifold of a Banach space $X$, that is a natural continuation of the study done in infinite dimensional Banach spaces.

  In Section 2 we recall  the concepts of $C^\ell$  Finsler manifold in the sense of Palais and Neeb-Upmeier introduced in  \cite{Palais}, \cite{Neeb}, \cite{Upmeier}  (both modeled on a Banach space $X$ either separable or non separable).  We introduce  the notions of
  $C^\ell$  Finsler manifold in the sense of Neeb-Upmeier {\em weak-uniform} and {\em uniform}  as a generalization of  a $C^\ell$  Finsler manifold in the sense of Palais and a Riemannian manifold, respectively.
Some results related to mean value inequalities are provided for  $C^\ell$  Finsler manifolds  in the sense of  Palais,  Neeb-Upmeier   {\em weak-uniform} and Neeb-Upmeier {\em uniform}.
 A result on the existence of suitable local bi-Lipschitz diffeomorphisms is also given as an essential tool to establish the results on approximation and extension in the next sections.

 In Section 3, we prove  that every real-valued and Lipschitz function  defined on a $C^\ell$ Finsler manifold $M$ (in the sense of
  Neeb-Upmeier) {\em weak-uniform} modeled on a Banach space $X$ can be uniformly approximated by a $C^k$-smooth and Lipschitz function provided the Banach space $X$ satisfies a similar approximation property (which we shall denote throughout this work ($*^k$)) in a {\em uniform way}. In other words,  if for every Lipschitz function $f: X\rightarrow \mathbb R$ and every $\varepsilon>0$, there is a  $C^k$-smooth and Lipschitz function $g$ such that  $|f(x)-g(x)|<\varepsilon$ for every $x\in M$ and moreover $\Lip(g)\le C \Lip(f)$, where  the constant $C$ only depends on $X$ and  $C$ {\em does not depend on a certain class of  equivalent norms} considered in $X$. This class of norms
is closely related to the set of  norms defined in the tangent spaces to $M$. We shall prove that, for $\ell=1$, the above assertion holds whenever the manifold $M$ is modeled on a Banach space $X$ with separable dual or  $M$ is a (separable or non-separable) Riemannian manifold.
In the proof of this assertion, we use the results given in the previous section as well as the existence of  smooth and Lipschitz partitions of unity subordinated to suitable open covers of  the manifold $M$ (see \cite{Rudin}, \cite{HajekJohanis} and \cite{MarLuis}) and the ideas of the separable Riemannian case \cite{AzFeMeRa}. A similar result is provided in the case that $M$
is a $C^\ell$ Finsler manifold
(in the sense of Neeb-Upmeier) {\em uniform}. It is worth mentioning  that, in this case, the constant $C$ is not required to be independent of a certain class of norms  considered in $X$.

 In Section 4 several applications are given. Under the above assumptions on the manifold $M$ (in particular, if $M$ is a Riemannian manifold), it can be  deduced that $M$ is {\em uniformly bumpable}  (this concept was first defined in \cite{AzFeMe})  and thus the Deville-Godefroy-Zizler smooth variational principle  holds whenever $M$ is complete. This generalizes the result given in \cite{AzFeMe} for separable and complete Riemannian manifolds. Moreover, it can be deduced that the infinite-dimensional version of the Myers-Nakai theorem for separable Riemannian manifolds given in \cite{GaJaRa} holds for every infinite-dimensional complete Riemannian manifold. An interesting open problem related to this result is whether an infinite-dimensional version of the Myers-Nakai theorem can be obtained for a certain class of (infinite-dimensional) complete Finsler manifolds.
 For a study of a finite dimensional version of the Myers-Nakai theorem for certain classes of Finsler manifolds see  \cite{Rangel-Tesis}.

 In Section 5, we follow the ideas of \cite{Fry1}, \cite{azafrymon} and \cite{HajekJohanis} to establish a characterization of the class of separable $C^\ell$ Finsler manifolds $M$ in the sense of Neeb-Upmeier which are  {\em $C^k$-smooth uniformly bumpable}
 (see Definition \ref{bumpable})  as those having the property that
 every   Lipschitz  function $f$ defined on $M$ can be uniformly approximated by a  $C^k$-smooth and Lipschitz
 function $g$ such that $\Lip(g)\le C \Lip(f)$ and $C$ only depends on $M$.

 In Section 6 the following extension result is established on a $C^\ell$ Finsler manifold $M$ (in the sense of Neeb-Upmeier) {\em weak-uniform} modeled on a Banach space $X$: for every $C^k$-smooth and real-valued function $f$ defined on a closed submanifold $N$ of $M$, such that $f$ is Lipschitz (with respect to the metric of the manifold $M$),  there is a $C^k$-smooth and Lipschitz extension of $f$ defined on $M$,  provided  the Banach space $X$ satisfies the approximation property ($*^k$) in a {\em uniform way}. The proof relies on a related result established in \cite{Azafrykeener} for Banach spaces with separable dual and in  \cite{MarLuis} for a larger class of Banach spaces.
\smallskip

The notation we use is standard. The norm in a Banach space $X$ is denoted by $||\cdot||$.  The open ball with center $x\in X$ and radius $r>0$ is denoted by $B(x,r)$. A $C^k$-smooth bump function $b:X\to \Real$ is a $C^k$-smooth function on X with bounded, non-empty support, where $\supp(b) =\overline{\{x \in  X : b(x)\neq 0\}}$.
If $M$ is a Banach-Finsler manifold, we denote by $T_xM$ the tangent space of $M$ at $x$. Recall that the tangent bundle of $M$ is $TM=\{(x,v):x\in M \text{ and } v\in T_x M\}$. We refer to \cite{Deville}, \cite{fabianhajek}, \cite{Lang}, \cite{Deim} and \cite{Upmeier}  for additional definitions.

%%%%%%%%%%%%%%%%%%%%%%%%%%%%
%%%%%%%%%%%%%%%%%%%%%%

\section{Preliminaries and Tools}

%%%%%%%%%%%%%%%%%%%%%%%%%%%%
%%%%%%%%%%%%%%%%%%%%%%

%\begin{defn}
%Let $M$ be a Banach manifold, {the tangent bundle} of $M$ is the set $TM=\{(p,v): p\in M , \ v\in TM_p \}$.
%\end{defn}

Let us begin with  the introduction  of the class of manifolds we will consider in this work.

\begin{defn} \label{defFinsler}
Let $M$ be a (paracompact) $C^\ell$ Banach manifold modeled on a Banach space $(X,||\cdot||)$. Let us denote by  $TM$ the tangent  bundle
of $M$ and consider a continuous  map $||\cdot||_M: TM\to [0,\infty)$. We say that

\begin{itemize}

\item[{\bf(F1)}]  $(M,||\cdot||_M)$ is a \textbf{$C^\ell$ Finsler manifold in the sense of Palais} (see \cite{Palais},  \cite{Deim}, \cite{Rabier})
if $||\cdot||_M$ satisfies the following conditions:
\begin{enumerate}
\item[(P1)] For every $x\in M$, the map $||\cdot||_x:={||\cdot||_M}_{\mid_{T_xM}}:T_xM\to [0,\infty)$ is a norm on the tangent space $T_xM$ such that for every chart $\varphi:U\to X$ with $x \in U$, the norm $v\in X \mapsto ||d\varphi^{-1}(\varphi(x))(v)||_x$ is equivalent to $||\cdot||$ on $X$.
\item[(P2)] For every $x_0\in M$, $\varepsilon>0$ and every chart $\varphi:U\to X$ with $x_0\in U$, there is an open neighborhood $W$ of $x_0$  such that if  $x\in W$ and $v\in X$, then
\begin{equation*}\label{palaisdef}
\frac{1}{1+\varepsilon}||d\varphi^{-1}(\varphi(x_0))(v)||_{x_0}\le ||d\varphi^{-1}(\varphi(x))(v)||_{x}\le (1+\varepsilon)||d\varphi^{-1}(\varphi(x_0))(v)||_{x_0}.
\end{equation*}
In terms of equivalence of norms, the above inequalities yield to the fact that the norms $||d\varphi^{-1}(\varphi(x))(\cdot)||_{x}$,
are $(1+\varepsilon)$-equivalent to $||d\varphi^{-1}(\varphi(x_0))(\cdot)||_{x_0}$.
\end{enumerate}

%Moreover, if there exists $Z\ge1$ such that  for every $x\in M$ there is a chart $\varphi:U\to X$ satisfying (P1) and (P2), and the norms $||d\varphi^{-1}(\varphi(x))(v)||_x$ and $||\cdot||$ are $Z$-equivalent, we will say that it is \textbf{$Z$-uniform}.

%\medskip

\item[{\bf (F2)}]  $(M,||\cdot||_M)$ is a \textbf{$C^\ell$ Finsler manifold in the sense of Neeb-Upmeier} (\cite{Neeb}; Upmeier in \cite{Upmeier} denotes these manifolds by  {\bf {\em normed Banach manifolds}}) if $||\cdot||_M$  satisfies   conditions {\em (P1)} and
\begin{enumerate}
\item[(NU1)] for every $x_0\in M$ there exists a chart $\varphi:U\to X$ with $x_0\in U$ and $K_{x_0}\ge 1$ such that for every $x\in U$ and every $v\in T_x M$,
\begin{equation}\label{ineq-Neeb-Upmeier}
\frac{1}{K_{x_0}}||v||_x\le ||d\varphi(x)(v)||\le K_{x_0}||v||_x.
\end{equation}
%(Notice that in general, the constant $K$ considered in {\em (NU2)}  depends on the
%point $x_0\in M$).
 Equivalently,  $(M,||\cdot||_M)$ is a $C^\ell$ Finsler manifold in the sense of Neeb-Upmeier if it satisfies conditions {\em (P1)} and
\item[(NU2)]  for every $x_0\in M$ there exists a  chart $\varphi:U\to X$ with $x_0\in U$
and a constant $M_{x_0}\ge 1$ such that for every $x\in U$ and every $v\in X$,
 \begin{equation*}
\frac{1}{M_{x_0}}||d\varphi^{-1}(\varphi(x_0))(v)||_{x_0}\le ||d\varphi^{-1}(\varphi(x))(v)||_{x}\le M_{x_0}||d\varphi^{-1}(\varphi(x_0))(v)||_{x_0}.
\end{equation*}
\end{enumerate}

\item[{\bf (F3)}] $(M,||\cdot||_M)$   is a \textbf{ $C^\ell$ Finsler manifold  in the sense of  Neeb-Upmeier   weak-uniform} if it satisfies {\em (P1)} and there is $K\ge 1$ such that
\begin{enumerate}
\item[(NU3)]  for every $x_0\in M$, there exists a chart $\varphi:U\to X$ with $x_0\in U$ satisfying, for every $x\in U$ and $v\in  X$,
%%%
\begin{equation} \label{ineq-K-weak-uniform}
\frac{1}{K}||d\varphi^{-1}(\varphi(x_0))(v)||_{x_0}\le ||d\varphi^{-1}(\varphi(x))(v)||_{x}\le K||d\varphi^{-1}(\varphi(x_0))(v)||_{x_0}.
\end{equation}
%%%
\end{enumerate}
In this case, we will say that $(M,|| \cdot ||_M)$ is \textbf{ $K$-weak-uniform}.

\item[{\bf (F4)}]  $(M,||\cdot||_M)$ is a  \textbf{ $C^\ell$ Finsler manifold in the sense of  Neeb-Upmeier uniform} if $||\cdot||_M$  satisfies {\em (P1)} and
\begin{enumerate}
\item[(NU4)] there is $S\ge 1$ such that for each $x_0\in M$ there exists a chart $\varphi:U\to X$ with $x_0\in U$ and
%%%
\begin{equation}\label{ineq-S-uniform}
\frac{1}{S}||v||_x\le ||d\varphi(x)(v)||\le S||v||_x, \quad {\text whenever \ } x\in U
\text{ and } v\in T_x M.
\end{equation}
%%%
\end{enumerate}
In this case, we will say that $(M,|| \cdot ||_M)$ is  \textbf{ $S$-uniform}.
\end{itemize}
\end{defn}

%%%

\begin{rem}\label{remark}
\begin{enumerate}
\item Clearly, {\em (F1)} $\Rightarrow$ {\em (F3)}. Also,  {\em (F4)} $\Rightarrow$ {\em
(F3)} $\Rightarrow$ {\em (F2)} (see Figure 1).
\item Every Riemannian manifold is  a $C^\infty$ Finsler manifold in the sense of Palais (see  \cite{Palais}) and a $C^\infty$ Finsler manifold in the sense of Neeb-Upmeier uniform.
\item The concepts of $C^\ell$ Finsler manifold in the sense of Palais and $C^\ell$ Finsler manifold in the sense of Neeb-Upmeier  are equivalent for finite-dimensional manifolds.
\item Nevertheless, in the infinite-dimensional setting, there are examples of  $C^\ell$ Finsler manifolds in the sense of Neeb-Upmeier that do not satisfy the Palais condition {\em (P2)}  (see \cite[Example 10]{GaGuJa}).
% the definition of Finsler manifold in the sense of Palais  given in \cite{GaGuJa} is different but the proof in \cite[Example 10]
%{GaGuJa} can also be adapted for the above definition {\em (F1)}).
\item \label {remark-k} Note that if $M$ is a $C^\ell$ Finsler manifold in the sense of Palais, then it is $K$-weak-uniform for every $K>1$.
Also, if $M$ is $S$-uniform, then $M$ is $S^2$-weak-uniform.
Indeed, inequality \eqref{ineq-S-uniform} is equivalent to the fact that  $||d\varphi^{-1}(\varphi(x))(\cdot)||_x$
is $S$-equivalent to $||\cdot||$ on $X$, for every $x\in U$. Now, it can be easily checked that this implies that
$||d\varphi^{-1}(\varphi(x))(\cdot)||_x$ is $S^2$-equivalent to $||d\varphi^{-1}(\varphi(x_0))(\cdot)||_{x_0}$
for every $x\in U$.
%\item There are examples of $C^\ell$ Finsler manifolds in the sense of Neeb-Upmeier that
%are not weak-uniform.
\item \label{remark-R} It can be checked that  condition (NU3) in the definition of $C^\ell$ Finsler manifold in the sense of Neeb-Upmeier weak-uniform yields to the following condition:
\begin{enumerate}
\item[(NU3')] for every $R>K$, $x_0\in M$ and every chart $\varphi:U\to X$ with $x_0\in U$, there is an open subset $W$ with $x_0\in W\subset U$ such that
%%%
\begin{equation} \label{ineq-K-weak-uniform'}
\frac{1}{R}||d\varphi^{-1}(\varphi(x_0))(v)||_{x_0}\le ||d\varphi^{-1}(\varphi(x))(v)||_{x}\le R||d\varphi^{-1}(\varphi(x_0))(v)||_{x_0},
\end{equation}
%%%
whenever $x\in W$ and $v\in X$,
\end{enumerate}
\end{enumerate}
\end{rem}

%%%%%%%%%%%%%%%%%%FIGURE%%%%%%%%%%%%
%%%%%%%%%%

%\centering
%\figg{0.80}{manifolds41.pdf}

\begin{center}
\begin{figure}[h]
%\begin{picture}(150,103)%Caja
{\includegraphics[width=9.34cm]{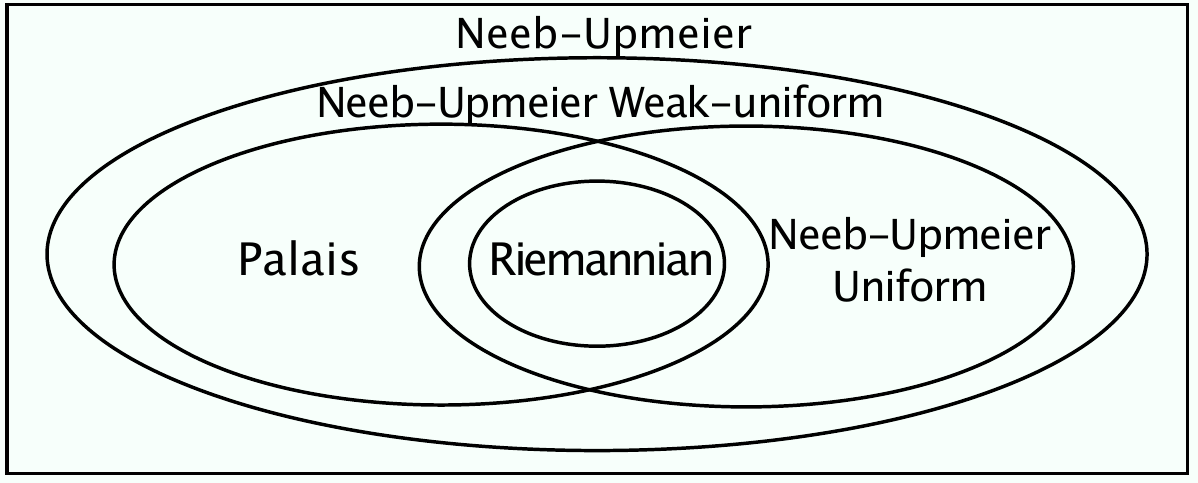}}
\label{cusp}
% \end{picture}
\caption{{\small Finsler Manifolds}}
\label{figura}
\end{figure}
\end{center}

%%%%%%%%%%
%%%%%%%%%%%%%%
%%%%%%%%

Let $M$ be a Banach manifold and $f:M\to\Real$ a differentiable function at $p\in M$. The norm of $df(p)\in {T_p M}^*$ is given by
%%%
\begin{equation*}
||df(p)||_p= \sup\{|df(p)(v)|:v\in T_p M, ||v||_p\le 1\}.
\end{equation*}
%%%
Let us consider a differentiable function  $f:M\to N$  between Banach manifolds $M$ and $N$. The norm of the derivative at the point $p\in M$ is defined as
%%%
\begin{align*}
& ||df(p)||_p= \sup\{||df(p)(v)||_{f(p)}:v\in T_p M, ||v||_p\le 1\} = &\\
& \qquad \qquad =\sup\{\xi(df(p)(v)):\xi\in {T_{f(p)} N}^*,\ v\in T_pM\ \text{and}\ ||v||_{p}=1=||\xi||^*_{f(p)} \}.&
\end{align*}
%%%
Recall that if $(M, ||\cdot||_M)$ is a Finsler manifold in the sense of Neeb-Upmeier, the {\em length} of a piecewise $C^1$ smooth
path $c:[a,b]\rightarrow M$ is defined as $\ell(c):=\int_{a}^b||c'(t)||_{c(t)}\,dt$. Besides, if $M$ is connected, then it is
connected by piecewise $C^1$ smooth paths, and the associated
{\em Finsler metric} $d_M$ on $M$ is defined as
%%%
\begin{equation*}
d_M(p,q)=\inf\{\ell(c): \, c \text{ is a piecewise } C^1 \text{ smooth path connecting } p \text{ to } q\}.
\end{equation*}
%%%
Recall that the Finsler metric is consistent with the topology given in $M$ (see \cite{Palais}, \cite[Proposition 12.22]{Upmeier}). The open ball of center $p\in M$ and radius
$r>0$ is  denoted by $B_M(p,r):=\{q\in M:\, d_M(p,q)<r\}$. The Lipschitz constant $\Lip(f)$ of a Lipschitz function $f:M\rightarrow N$, where
$M$ and $N$ are Finsler manifolds, is defined as $\Lip(f)=\sup\{\frac{d_N(f(x),f(y))}{d_M(x,y)}: x,y \in M, x\not=y\}$.

In the following proposition we obtain some  ``mean value" inequalities. The ideas of the proof  follow those of the Riemannian case (see \cite{AzFeMe}).

\begin{prop}(Mean value inequalities). \label{mean:value}
Let  $M$ and $N$ be  $C^1$ Finsler manifolds in the sense of Neeb-Upmeier,  and  $f:M\to N$ be a  $C^1$-smooth function.
 \begin{itemize}
 \item[(i)]  If  $\sup\{||df(x)||_x:x\in M\}<\infty$, then $f$ is Lipschitz and  $\Lip(f)\le \sup\{||df(x)||_x:x\in M\}$.
 \item[(ii)] If $f$ is Lipschitz, the manifold $M$ is $K$-weak-uniform and the manifold $N$ is $P$-weak-uniform, then $\sup\{||df(x)||_x:x\in M\}\le KP \Lip(f)$.
 \item[(ii')] If $f$ is Lipschitz, the manifold $M$ is $K$-uniform and the manifold $N$ is $P$-uniform, then $\sup\{||df(x)||_x:x\in M\}\le K^2P^2 \Lip(f)$.
\item[(iii)] Thus, if  $f$ is Lipschitz and the manifolds $M$ and $N$ are Finsler manifolds in the sense of Palais,
 then $\sup\{||df(x)||_x:x\in M\}=\Lip(f)$.
\end{itemize}
\end{prop}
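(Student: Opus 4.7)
\smallskip
\noindent\textbf{Proof plan.} Part (i) is the standard length argument: given any piecewise $C^1$ path $c:[a,b]\to M$ from $p$ to $q$, the composition $f\circ c$ is piecewise $C^1$, and by the chain rule together with the definition of $\|df(x)\|_x$ one has $\|(f\circ c)'(t)\|_{f(c(t))}\le \|df(c(t))\|_{c(t)}\|c'(t)\|_{c(t)}\le L\,\|c'(t)\|_{c(t)}$, where $L:=\sup_{x\in M}\|df(x)\|_x$. Integrating gives $\ell(f\circ c)\le L\,\ell(c)$, so $d_N(f(p),f(q))\le L\,\ell(c)$; taking the infimum over paths $c$ yields $\Lip(f)\le L$.

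The bulk of the work is in part (ii). Fix $x_0\in M$ and $v\in T_{x_0}M$ with $\|v\|_{x_0}=1$, and choose charts $\varphi:U\to X$ at $x_0$ and $\psi:V\to Y$ at $y_0:=f(x_0)$ witnessing the $K$-weak-uniform and $P$-weak-uniform conditions (NU3), shrinking $U$ so that $f(U)\subset V$. Introduce the auxiliary norms $\|z\|_{x_0,\varphi}:=\|d\varphi^{-1}(\varphi(x_0))(z)\|_{x_0}$ on $X$ (and analogously $\|\cdot\|_{y_0,\psi}$ on $Y$); condition (NU3) says exactly that $\|\cdot\|_{x,\varphi}$ is $K$-equivalent to $\|\cdot\|_{x_0,\varphi}$ for every $x\in U$. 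From this I will derive two local metric estimates:
\begin{itemize}
\item[(a)] \emph{Upper bound:} for $x$ close enough to $x_0$ so that the segment $\varphi(x_0)+t(\varphi(x)-\varphi(x_0))$, $t\in[0,1]$, lies in $\varphi(U)$, pulling this segment back to $M$ gives a curve whose length, computed by (NU3), is at most $K\|\varphi(x)-\varphi(x_0)\|_{x_0,\varphi}$; hence $d_M(x_0,x)\le K\|\varphi(x)-\varphi(x_0)\|_{x_0,\varphi}$.
\item[(b)] \emph{Lower bound:} choose $r>0$ with $B_N(y_0,r)\subset V$; for $y$ with $d_N(y_0,y)<r/2$, any path realizing nearly the infimum distance must stay in $B_N(y_0,r)\subset V$, and for every path $\gamma$ inside $V$ from $y_0$ to $y$, (NU3) gives $\ell(\gamma)=\int\|\tilde\gamma'\|_{\gamma,\psi}\,dt\ge \tfrac1P\int\|\tilde\gamma'\|_{y_0,\psi}\,dt\ge \tfrac1P\|\psi(y)-\psi(y_0)\|_{y_0,\psi}$ (the last step because in a Banach space with a fixed norm, the length of any path exceeds the norm of the chord). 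Hence $d_N(y_0,y)\ge \tfrac1P\|\psi(y)-\psi(y_0)\|_{y_0,\psi}$.
\end{itemize}

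With these estimates in hand, set $w:=d\varphi(x_0)(v)\in X$ and consider the chart-straight curve $c(t):=\varphi^{-1}(\varphi(x_0)+tw)$, so $c(0)=x_0$, $c'(0)=v$, and $\|w\|_{x_0,\varphi}=\|v\|_{x_0}=1$. The curve $h(t):=\psi(f(c(t)))$ in $Y$ satisfies $h'(0)=d\psi(y_0)(df(x_0)(v))$, hence $\|df(x_0)(v)\|_{y_0}=\|h'(0)\|_{y_0,\psi}$, and since $\|\cdot\|_{y_0,\psi}$ is a norm on $Y$ equivalent to $\|\cdot\|$ (by (P1)) this equals $\lim_{t\to 0^+}\|h(t)-h(0)\|_{y_0,\psi}/t$. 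Chaining the estimates for small $t>0$,
\begin{equation*}
\|h(t)-h(0)\|_{y_0,\psi}\le P\,d_N(f(c(t)),y_0)\le P\,\Lip(f)\,d_M(c(t),x_0)\le KP\,\Lip(f)\,\|tw\|_{x_0,\varphi}=KP\,\Lip(f)\,t,
\end{equation*}
and dividing by $t$ and letting $t\to 0^+$ gives $\|df(x_0)(v)\|_{y_0}\le KP\,\Lip(f)$. Taking the supremum over unit $v$ proves (ii). Part (ii') is immediate from (ii) together with Remark \ref{remark}(5), which says $S$-uniform implies $S^2$-weak-uniform. Part (iii) follows by applying (ii) to Palais manifolds, which by Remark \ref{remark}(5) are $K$-weak-uniform for \emph{every} $K>1$: letting $K,P\to 1^+$ yields $\sup_x\|df(x)\|_x\le \Lip(f)$, and the reverse inequality is (i).

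The main subtlety is step (b): controlling $d_N$ from below forces one to reckon with paths that may exit the chart domain $V$. The ball-inclusion trick above is the cleanest fix, and once (a) and (b) are in place the rest of the argument is a matter of bookkeeping along a single curve through $x_0$.
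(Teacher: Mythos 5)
Your proposal is correct, and for the substantive part (ii) it takes a genuinely different route from the paper. Parts (i), (ii') and (iii) coincide with the paper's treatment (push a near-optimal path forward and integrate; invoke Remark \ref{remark}(\ref{remark-k}); let $K,P\to 1^+$ for Palais manifolds). For (ii), however, the paper first proves the scalar case $N=\Real$ by showing $f\circ\varphi^{-1}$ is $KL$-Lipschitz near $\varphi(x_0)$ for the auxiliary norm and then quoting the Banach-space mean value inequality; the general case is then handled by contradiction, choosing a norming functional $\xi\in T_{f(x_0)}N^*$ with $|\xi(df(x_0)(v))|>KPL$ and applying the scalar case to $g=\xi\circ d\psi^{-1}(\psi(f(x_0)))\circ\psi\circ f$, which it shows to be $PL$-Lipschitz. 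You instead estimate $\|df(x_0)(v)\|_{f(x_0)}$ directly as the difference quotient of $h=\psi\circ f\circ c$ along a chart-straight curve $c$, sandwiching it between the upper bound $d_M(x_0,x)\le K\|\varphi(x)-\varphi(x_0)\|_{x_0,\varphi}$ and the lower bound $d_N(y_0,y)\ge\tfrac1P\|\psi(y)-\psi(y_0)\|_{y_0,\psi}$; your estimates (a) and (b) are in substance the bi-Lipschitz estimates the paper isolates later as Lemma \ref{desigualdades:BiLipschitz}, so you are proving that lemma inline. Both arguments hinge on the same genuinely delicate point, which you correctly identify and resolve: a near-infimal path between nearby points of $N$ cannot leave the chart domain $V$, since its length would then exceed the radius of a ball contained in $V$ (your one-endpoint-at-the-center version is in fact slightly simpler than the paper's two-endpoint variant). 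What your route buys is a direct, contradiction-free argument that never invokes duality in $T_{f(x_0)}N^*$; what the paper's route buys is a reusable scalar case and the ability to treat the Banach-space mean value inequality as a black box.
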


\begin{proof}
$(i)$
Let us consider $p,q\in M$ with $d_M(p,q)<\infty$, and $\varepsilon>0$. Then there is   a piecewise $C^1$  smooth path  $\gamma:[0,T]\to M$ joining $p$ and $q$ with $\ell(\gamma)\le d_M(p,q)+\varepsilon/C$, where $C=\sup\{||df(x)||_x:x\in M\}$. In order to simplify the proof, let us assume that $\gamma$ is $C^1$
smooth (the general case follows straightforward).
Now, we define $\beta:[0,T]\to N$ as $\beta(t)=f(\gamma(t))$. Then,
 $\beta$ joins the points $f(p)$ and $f(q)$, and
 %%%
\begin{eqnarray*}
d_N(f(p),f(q))&\le & \ell(\beta)\le \int_0^T ||df(\gamma(t))(\gamma'(t))||_{\beta(t)}dt \le
\int_0^T ||df(\gamma(t))||_{\gamma(t)}||\gamma'(t)||_{\gamma(t)}\,dt
  \\ &\le &C\int_0^T ||\gamma'(t)||_{\gamma(t)}\,dt \le Cd_M(p,q)+\varepsilon.
\end{eqnarray*}
%%%
Thus, $d_N(f(p),f(q))\le Cd_M(p,q)+\varepsilon$ for every $\varepsilon>0$.
Then,  $d_N(f(p),f(q))\le Cd_M(p,q)$ and  $\Lip(f)\le C$.

%%%%%%

$(ii)$ First, let us consider the case $N=\Real$. Let us denote by $L:=\Lip(f)$. Let us take $x_0\in M$ and $\varphi:U\to X$ a chart with $x_0\in U$, $\varphi(x_0)=0$ and satisfying inequality \eqref{ineq-K-weak-uniform}. In order to simplify the proof, let us denote by  $|||\cdot|||$
 the norm $||d\varphi^{-1}(0)(\cdot)||_{x_0}$ on $X$.  Let us take a ball  $B(0,r)\subset \varphi(U)$ and check that the function $f\circ \varphi^{-1}:B(0,r)\to \Real$ is $KL$-Lipschitz with respect to the norm $|||\cdot|||$. Indeed,
 for every pair of points $x,y\in B(0,r)$, we define $\gamma:[0,1]\to M$ as $\gamma(t):=\varphi^{-1}(ty+(1-t)x) \in U$.
 By inequality \eqref{ineq-K-weak-uniform} we obtain
  %%%
\begin{align*}
&|f\circ\varphi^{-1} (x)-f\circ\varphi^{-1}(y)|\le Ld_M(\varphi^{-1}(x),\varphi^{-1}(y))\le L \ell(\gamma) = &\\
& \qquad \qquad = L\int_0^1 ||\gamma'(t)||_{\gamma(t)}dt =  L \int_0^1 ||d\varphi^{-1}(ty+(1-t)x)(y-x)||_{\gamma(t)}dt \le &\\
& \qquad \qquad \le KL  \int_0^1 ||d\varphi^{-1}(0)(y-x)||_{x_0}dt= KL |||x-y|||.&
\end{align*}
%%%
Thus, from the mean value inequalities in Banach spaces we obtain  $|d (f\circ \varphi^{-1})(0)(v)|\le KL |||v|||$ for every
$v\in T_{x_0}M$. Therefore,
%%%
\begin{align*}
&|df(x_0)(v)|=|d(f\circ \varphi^{-1}\circ \varphi )(x_0)(v)| =|d (f\circ \varphi^{-1})(0)(d\varphi(x_0)(v))|\le &\\
& \qquad \qquad \quad \le KL|||d\varphi(x_0)(v)||| = KL||d\varphi^{-1}(0)(d\varphi(x_0)(v))||_{x_0}=KL||v||_{x_0},&
\end{align*}
%%%
for every
$v\in T_{x_0}M$ and $||df(x_0)||_{x_0}\le KL$. Since this conclusion holds  for every $x_0\in M$, we deduce that
\begin{equation*}
\sup_{x\in M} ||df(x)||_x\le KL.
\end{equation*}
%%%%%%%%%%%
%%%%%%%%%%%%

Now, let us consider the general case, i.e.  $f:M\to N$ where $M$ is a weak-uniform Neeb-Upmeier
manifold with constant $K\ge 1$ and  $N$ is a weak-uniform Neeb-Upmeier manifold with constant $P\ge 1$. If there is a point $x_0\in M$ such that $||df(x_0)||_{x_0}>KPL$, then there are $\xi \in T_{f(x_0)}N^*$ and $v \in T_{x_0} M$ such that $||v||_{x_0}=||\xi||^*_{f(x_0)}=1$, and $|\xi(df(x_0)(v))|>KPL$.  Let us take  a chart of $N$ at $f(x_0)$ satisfying inequality \eqref{ineq-K-weak-uniform} with constant $P$, which we shall denote by $\psi:V\to Y$, where $N$ is modeled on the Banach space $Y$. Also, let us take $r>0$ such that $f(x_0)\in B_N(f(x_0),r/4)\subset B_N(f(x_0),r)\subset V$ and define the function
%%%
\begin{eqnarray*}
&&g: f^{-1}(B_N(f(x_0),r/4))\to \Real,
\\
&&g(x)=\xi\circ d\psi^{-1}(\psi(f(x_0)))(\psi(f(x))).
\end{eqnarray*}
%%%
Then, on the one hand,
%%%
\begin{align}\label{equation:derivadag}
|d g(x_0)(v)|=|\xi\circ d\psi^{-1}(\psi(f(x_0)))(d\psi(f(x_0))df(x_0)(v))|=&\\
=|\xi(d f(x_0)(v))|>KPL.&\nonumber
\end{align}
%%%
On the other hand, let us check that the function $g$ is  $PL$-Lipschitz with the distance $d_M$. Indeed, first let us show
 that $\psi$ is $P$-Lipschitz with the norm $|||\cdot|||:=|| d\psi^{-1}(\psi(f(x_0)))(\cdot)||_{f(x_0)}$. Since  $\psi :V\to Y$ satisfies
 inequality \eqref{ineq-K-weak-uniform} with constant $P$,  we have for every $z\in V$ and $v\in T_z N$,
 %%%
\begin{align*}
&|||d\psi(z)(v)|||=||d\psi^{-1}(\psi(f(x_0)))(d\psi(z)(v))||_{f(x_0)}\le &\\
& \qquad \qquad \qquad \le  P ||d\psi^{-1}(\psi(z))(d\psi(z)(v))||_{z} = P||v||_z &
\end{align*}
%%%
Hence,  $|||d\psi(z)|||:=\sup\{|||d\psi(z)(v)|||: v\in T_z N\ \text{and}\ ||v||_z\le 1\}\le P$ for every $z\in V$, and thus
$\sup \{|||d\psi(z)|||: z\in V\}\le P$.
Also, let us check that,   for every $z,z'\in B_N(f(x_0),r/4)$,
\begin{equation*}d_N(z,z')=\inf\{\ell(\gamma): \gamma \text{ is a piecewise } C^1\text{ path connecting } z
\text{ and } z'  \text{ with } \gamma \subset V \}. \end{equation*}
 Indeed, if there are $z,z' \in B_N(f(x_0),r/4)$ with $d_N(z,z')=\inf\{\ell(\gamma): \gamma$ is a piecewise $C^1$ path connecting $z$ and $z' \}< \inf\{\ell(\gamma): \gamma$ is a piecewise $C^1$ path connecting $z$ and $z'$ with $\gamma \subset V \}$, then there is a path $\gamma:[0,1]\to N$ such that $\gamma(0)=z$,  $\gamma(1)=z'$, $\ell(\gamma)<d_N(z,z')+r/4$ and $\gamma(t)\notin V$ for some $t\in [0,1]$. Then $d_N(f(x_0),\gamma(t))\ge r$ and $d_N(z,\gamma(t))\le \ell(\gamma)\le d_N(z,z')+r/4$. This yields,
\begin{equation*}
r\le d_N(f(x_0),\gamma(t))\le d_N(f(x_0),z)+d_N(z,\gamma(t))\le d_N(f(x_0),z)+d_N(z,z')+r/4< r,
\end{equation*}
which is a contradiction.
Now, we can follow the proof of part  (i) to deduce that $\psi$ is $P$-Lipschitz on $B_N(f(x_0),r/4)$ with the norm $|||\cdot|||
=|| d\psi^{-1}(\psi(f(x_0)))(\cdot)||_{f(x_0)}$ on $Y$.
Finally, for every $x,y\in f^{-1}(B(f(x_0),r/4))$, we have
\begin{align}\label{equation:Lipschitzg}
|g(x)-g(y)|=|\xi\circ d\psi^{-1}(\psi(f(x_0)))(\psi\circ f(x)-\psi\circ f(y))|\le \nonumber& \\
  \le ||d\psi^{-1}(\psi(f(x_0)))(\psi\circ f(x)-\psi\circ f(y))||_{f(x_0)} = &\\
 \qquad  =  |||\psi\circ f(x)-\psi\circ f(y)||| \le P d_N(f(x),f(y))\le PL d_M(x,y).& \nonumber
\end{align}
Then $g:f^{-1}(B(f(x_0),r/4))\to \Real$ is $PL$-Lipschitz, and by the real case we have that  $\sup\{||dg(x)||_x:x\in f^{-1}(B(f(x_0),r/4))\}\le KPL$, which contradicts (\ref{equation:derivadag}).
\smallskip

(ii') and (iii) follow from (ii) and Remark \ref{remark}(\ref{remark-k}).\end{proof}

%%%%%%%%%%%%%%%

The following lemma provides a local bi-Lipschitz behaviour of the charts of a
$C^1$ Finsler manifold.

\begin{lem}\label{desigualdades:BiLipschitz}
%%%
Let us consider a  $C^1$ Finsler manifold $M$.
\begin{enumerate} \item If  $M$ is $K$-weak-uniform, then for every $x_0\in M$ and every chart
$(U,\varphi)$  with $x_0\in U$ satisfying inequality \eqref{ineq-K-weak-uniform}, there exists an open neighborhood
 $V\subset U$ of $x_0$ satisfying
\begin{equation}\label{B-L1}
\frac{1}{K}d_M(p,q)\le |||\varphi(p)-\varphi(q)|||\le K d_M(p,q), \quad \text{ for every } p,q\in V,
\end{equation}
where   $|||\cdot|||$ is the (equivalent) norm $||d\varphi^{-1}(\varphi(x_0))(\cdot)||_{x_0}$ defined  on $X$.
\item If the manifold $M$ is  $K$-uniform, then   for every $x_0\in M$ and every chart
$(U,\varphi)$ with $x_0\in U$ satisfying condition \eqref{ineq-S-uniform},  there exists an open neighborhood
 $V\subset U$ of $x_0$ satisfying
\begin{equation}\label{B-L2}
\frac{1}{K}d_M(p,q)\le ||\varphi(p)-\varphi(q)||\le K d_M(p,q), \quad \text{ for every } p,q\in V.
\end{equation}
\end{enumerate}
\end{lem}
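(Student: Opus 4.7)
The plan is to prove both bi-Lipschitz inequalities by comparing path lengths in $M$ with norm differences in $X$, after first shrinking $U$ to a neighborhood $V$ of $x_0$ small enough that the Finsler distance between points of $V$ is realized (up to arbitrary precision) by paths that stay inside $U$. Throughout, assume $\varphi(x_0)=0$. The constant $K$ appearing in either the weak-uniform condition \eqref{ineq-K-weak-uniform} or the uniform condition \eqref{ineq-S-uniform} will control both directions.

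First I would set up the neighborhood $V$. Choose $r_1>0$ so that the open ball $B_1=\{y\in X:|||y|||<r_1\}$ (in case (1), using the norm $|||\cdot|||:=\|d\varphi^{-1}(0)(\cdot)\|_{x_0}$) or $\{y\in X:\|y\|<r_1\}$ (in case (2)) is contained in $\varphi(U)$. Put $V_1=\varphi^{-1}(B_1)$. Because the Finsler topology coincides with the manifold topology, there is $r_2>0$ with $B_M(x_0,r_2)\subset V_1$; set $V:=B_M(x_0,r_2/4)$, an open neighborhood of $x_0$ contained in $U$.

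Next I would establish the inequality $d_M(p,q)\le K\,|||\varphi(p)-\varphi(q)|||$ (and analogously with $\|\cdot\|$ in case (2)). For $p,q\in V$, convexity of $B_1$ gives a straight-line path $\gamma(t)=(1-t)\varphi(p)+t\varphi(q)$ contained in $B_1$, and $c(t):=\varphi^{-1}(\gamma(t))$ is a $C^1$-smooth path in $V_1\subset U$ from $p$ to $q$. Since $c'(t)=d\varphi^{-1}(\gamma(t))(\varphi(q)-\varphi(p))$, the defining inequality \eqref{ineq-K-weak-uniform} (applied with $v=\varphi(q)-\varphi(p)$) gives, in case (1),
\[
\|c'(t)\|_{c(t)}\le K\,\|d\varphi^{-1}(0)(\varphi(q)-\varphi(p))\|_{x_0}=K\,|||\varphi(q)-\varphi(p)|||,
\]
so integrating over $[0,1]$ yields $d_M(p,q)\le\ell(c)\le K\,|||\varphi(p)-\varphi(q)|||$. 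For case (2) the same argument works after setting $w:=d\varphi^{-1}(\gamma(t))(\varphi(q)-\varphi(p))$ and using \eqref{ineq-S-uniform} in the form $\|w\|_{c(t)}\le K\|d\varphi(c(t))(w)\|=K\|\varphi(q)-\varphi(p)\|$.

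For the reverse inequality $|||\varphi(p)-\varphi(q)|||\le K\,d_M(p,q)$, the main subtlety is that one must restrict to paths lying inside $U$. Given $p,q\in V$, we have $d_M(p,q)<r_2/2$; for any $\varepsilon\in(0,r_2/4)$, choose a piecewise $C^1$ path $c:[0,T]\to M$ joining $p$ and $q$ with $\ell(c)<d_M(p,q)+\varepsilon<3r_2/4$. Then $d_M(x_0,c(t))\le d_M(x_0,p)+\ell(c|_{[0,t]})<r_2$, so $c\subset B_M(x_0,r_2)\subset V_1\subset U$. Setting $\gamma=\varphi\circ c$ and applying the inequality $|||w|||\le K\,\|d\varphi^{-1}(\varphi(x))(w)\|_x$ (taking $w=d\varphi(c(t))(c'(t))$, $x=c(t)$) one obtains $|||\gamma'(t)|||\le K\,\|c'(t)\|_{c(t)}$; integrating,
\[
|||\varphi(p)-\varphi(q)|||\le\int_0^T|||\gamma'(t)|||\,dt\le K\,\ell(c)<K\bigl(d_M(p,q)+\varepsilon\bigr),
\]
and letting $\varepsilon\to0$ finishes case (1). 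Case (2) is identical, using \eqref{ineq-S-uniform} to pass from $\|d\varphi(c(t))(c'(t))\|$ to $K\|c'(t)\|_{c(t)}$.

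The only real obstacle is the localization step of the third paragraph: without restricting to paths inside $U$ one cannot invoke the chart inequality along the path. Choosing $V$ as a Finsler ball of radius $r_2/4$, with $B_M(x_0,r_2)\subset V_1\subset U$, forces every near-minimizing path between points of $V$ to remain inside $U$, which is exactly the same localization trick used in the proof of Proposition~\ref{mean:value}(ii).
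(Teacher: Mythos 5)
Your proof is correct and takes essentially the same route as the paper's: the upper bound via the push-forward of near-minimizing paths (kept inside $U$ by the same localization trick as in Proposition \ref{mean:value}), and the lower bound via the straight-line path in the chart. The only cosmetic difference is that you take $V$ to be a small Finsler ball and use convexity of a norm-ball in $X$ to contain the straight segment, whereas the paper takes $V$ to be the preimage of a small chart-ball.
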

%%%%%%%%%%%
\begin{proof}
Let us assume the hypothesis in (1) holds. The arguments given in the proof of  Proposition \ref{mean:value} yield to the existence
of $r>0$ with  $B_M(x_0,r/4)  \subset  B_M(x_0,r) \subset U\subset M$ such that if $p,q\in B_M(x_0,r/4)$, then
 $$d_M(p,q)=\inf\{\ell(\gamma): \gamma \text{ is a piecewise }
 C^1 \text{ path connecting } p \text{ and } q  \text{ with } \gamma \subset U \}.$$
  Let us consider $p,q\in B_M(x_0,r/4)$,  $\varepsilon>0$ and a piecewise $C^1$  smooth path  $\gamma:[0,T]\to U$ joining $p$
  and $q$ with $\ell(\gamma)\le d_M(p,q)+\varepsilon/K$.
 Let us define $\beta:[0,T]\to X$ as $\beta(t)=\varphi(\gamma(t))$. Then,
 $\beta$ joins the points $\varphi(p)$ and $\varphi(q)$, and from inequality \eqref{ineq-K-weak-uniform} we obtain
\begin{align*}
|||\varphi(p)-\varphi(q)|||\le
\ell(\beta)\le \int_0^T |||d\varphi(\gamma(t))(\gamma'(t))|||\,dt \le K \int_0^T ||\gamma'(t)||_{\gamma(t)}dt = &\\
=K\ell(\gamma) \le Kd_M(p,q)+\varepsilon.&
\end{align*}
Now, let us consider $\varphi^{-1}$ and $s>0$ such that $B(\varphi(x_0),s)\subset \varphi(B_M(x_0,r/4))$. For $x,y\in B(\varphi(x_0),s)$, let us define the path $\gamma:[0,1]\to M$ as
$\gamma(t):=\varphi^{-1}(ty+(1-t)x) \in B_M(x_0,r/4).$
Then,
\begin{align*}
d_M(\varphi^{-1}(x),\varphi^{-1}(y)) &\le \ell(\gamma) =\int_0^1 ||\gamma'(t)||_{\gamma(t)}dt =
 \int_0^1 ||d\varphi^{-1}(\varphi(\gamma(t)))(y-x)||_{\gamma(t)}dt \\ &\le  \int_0^1 K ||d\varphi^{-1}(\varphi(x_0))(y-x)||_{x_0}
 = K |||x-y|||.\end{align*}
Finally, let us define the open set $V:=\varphi^{-1}(B(\varphi(x_0),s))$.

 The proof under the hypothesis given in  (2) follows along the same lines.
  \end{proof}

 %%%%%%%%%%%%%%%%%%%%%%%%%%
%%%%%%%%%%%%%%%%%%%%%%%%%%%

\section{Smooth Approximation of Functions}\label{3}

%%%%%%%%%%%%%%%%%%%%%%%%%%%%
%%%%%%%%%%%%%%%%%%%%%%%%%%%%
Before stating the main result of this section, let us define property $(*^k)$ for Banach spaces as the following Lipschitz and $\mathcal{C}^k$-smooth approximation property for Lipschitz mappings on Banach spaces.
\begin{defn}
A Banach space $(X, ||\cdot||)$ satisfies  property $(*^k)$  if there is a constant $C_0\ge1$, which only  depends on the space $(X,||\cdot||)$, such that, for any Lipschitz function $f:X\to \Real$ and any $\varepsilon>0$ there is a Lipschitz, $C^k$-smooth function $K:X\to \Real$ such that
\begin{equation*}
|f(x)-K(x)|<\varepsilon \text{ for all } x\in X   \text{ and } \Lip(K)\leq C_0 \Lip(f).
\end{equation*}
\end{defn}
Notice that if a Banach space $X$ satisfies  property  $(*^k)$, then for every Lipschitz function $f: A\to \Real$
(where  $A$ is a subset of  $X$)   and every $\varepsilon>0$ there is a Lipschitz, $C^k$-smooth function $K:X\to \Real$ such that
\begin{equation*}
\left\vert f(x)-K(x) \right\vert<\varepsilon \text{ for all } x\in A  \text{ and } \Lip(K)\leq C_0 \Lip(f).
\end{equation*}
Indeed, there exists a Lipschitz extension $F:X\to\Real$ of $f$ such that $\Lip(F)=\Lip(f)$  (for instance  $x\mapsto  \inf_{y\in A} \{f(y)+\Lip(f)||x-y||\}$), and applying property $(*^k)$ to $F$  the assertion is obtained.

%%%
\begin{rem}
\begin{enumerate}
\item Every finite-dimensional Banach space $X$ admits  property $(*^\infty)$. Since the  functions $K(\cdot)$ are constructed by means of convolutions, it can be easily checked that the constant $C_0$ can be taken as $1$ for every equivalent
norm $||\cdot||$ considered in $X$.
\item Every Hilbert space $H$ admits property $(*^1)$  (see \cite{LL}). Also, from the construction of the
functions $K(\cdot)$ with inf-sup-convolution formulas, it can be  easily checked that the constant
$C_0$ can be taken as $1$ for every Hilbertian norm $||\cdot||$ considered in $H$.
\item Every separable Banach space with a $C^k$-smooth and Lipschitz bump function satisfies property $(*^k)$ (see \cite{azafrymon}, \cite{Azafrykeener}, \cite{Fry1} and \cite{HajekJohanis}).
Moreover, the constant $C_0$ can be obtained to be independent of the equivalent norm considered in $X$. Indeed, a careful examination of the proofs given in these  papers, allows us to ensure that $X$ satisfies property $(*^1)$ with  constant $C_0\le 600$  for every  $C^1$-smooth norm defined on $X$.
Now, using  the density of the set of $C^1$-smooth norms on the metric space $(\mathcal N(X), h)$ of all (equivalent) norms
defined in $X$ with the Hausdorff metric $h$ (see \cite[Theorem II.4.1]{Deville}),
it can be shown that $X$ satisfies property $(*^1)$ with $C_0\le 601$ for any equivalent norm on $X$. Now, we can deduce property  $(*^k)$ with constant $C_0\le 602$ (independently of the equivalent norm considered on $X$)
from the results on $C^1$-fine approximation of $C^1$-smooth functions by
$C^k$-smooth functions \cite{Azfrygiljarlovo}, \cite{HajekJohanis} and \cite{Moulis}.
\item A Banach space $X$ such that there is a bi-Lipschitz homeomorphism between $X$ and a subset of $c_0(\Gamma)$, for some set $\Gamma\neq \emptyset$, whose coordinate functions are $\mathcal{C}^k$-smooth, satisfies property $(*^k)$ (see  \cite{HajekJohanis}). Unfortunately, we do not know if, in this general case, the constant $C_0$ can be obtained to be independent
of the (equivalent) norm considered in $X$.
\end{enumerate}
\end{rem}

%%%%%%%%%%%%%%%%%
%%%%%%%%%%%%%%%%%%%
%%%%%%%%%%%%%%%%%%%%
%%%%%%%%%%%%%%%%%%%%%%%%%%

The following lemma is quite useful in  approximation of functions on Banach spaces. It  provides the existence of suitable open coverings on a (paracompact) Banach manifold, which will be key to obtain results on smooth approximations and smooth extensions.
Let us recall that  the distance between two sets $A$ and $B$ of a metric space is
 $\dist(A,B)=\inf \{d(a,b): a\in A, \ b\in B\}$.

\begin{lem}(See M.E. Rudin, \cite{Rudin}) \label{Rudin}
Let $E$ be a metric space, $\mathcal{U}=\{U_r\}_{r\in\Omega}$ be an open cover of $E$. Then, there are open
refinements $\{V_{n,r}\}_{n\in\mathbb{N},r\in\Omega}$ and $\{W_{n,r}\}_{n\in\mathbb{N},r\in\Omega}$ of $\mathcal{U}$ satisfying  the following properties:
\begin{itemize}
\item[(i)] $V_{n,r}\subset W_{n,r}\subset U_r$ for all $n\in\mathbb{N}$ and $r\in \Omega$,
\item[(ii)] $\dist(V_{n,r},E\setminus W_{n,r})\geq  1/2^{n+1}$ for all $n\in\mathbb{N}$ and $r\in\Omega$,
\item[(iii)] $\dist(W_{n,r},W_{n,r'})\geq 1/2^{n+1}$ for any $n\in\mathbb{N}$ and $r,r'\in\Omega$, $r\not=r'$,
\item[(iv)] for every $x\in E$ there is an open ball $B(x,s_x)$ of $E$ and a natural number $n_x$ such that
     \begin{enumerate}
         \item[(a)] if $i>n_x$, then $B(x,s_x)\cap W_{i,r}=\emptyset$ for every $r\in\Omega$,
         \item[(b)] if $i\leq n_x$, then $B(x,s_x)\cap W_{i,r}\neq\emptyset$ for at most one $r\in\Omega$.
     \end{enumerate}
\end{itemize}
\end{lem}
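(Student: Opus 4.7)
The plan is to adapt M.E. Rudin's construction \cite{Rudin} from her classical proof of the paracompactness of metric spaces, producing both refinements at once with carefully chosen separations. After fixing a well-order $\prec$ on $\Omega$, I would define the refinements by induction on $n$ as follows. Having built $W_{m,s}$ for all $m<n$ and $s\in\Omega$, set the auxiliary ``core''
\[
E_{n,r} := \{x \in U_r : d(x, E \setminus U_r) \ge 3\cdot 2^{-n}\} \setminus \Bigl(\bigcup_{s \prec r} U_s \;\cup\; \bigcup_{m<n,\,s\in\Omega} W_{m,s}\Bigr),
\]
and define the open thickenings
\[
V_{n,r} := \{y \in E : d(y, E_{n,r}) < 2^{-(n+2)}\}, \qquad W_{n,r} := \{y \in E : d(y, E_{n,r}) < 3\cdot 2^{-(n+2)}\}.
\]
The well-ordering makes the cores pairwise $3\cdot 2^{-n}$-separated at each scale $n$: if $r \prec r'$ then any $x'\in E_{n,r'}$ is forced out of $U_r$, while any $x\in E_{n,r}$ lies $3\cdot 2^{-n}$-deep inside $U_r$. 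The purpose of subtracting $\bigcup_{m<n,s}W_{m,s}$ is to obtain separation across scales, which is essential for local finiteness.

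Properties (i)--(iii) would follow from triangle-inequality estimates with these explicit radii. For (i), the thickening radius $3\cdot 2^{-(n+2)}$ is strictly less than the core-depth $3\cdot 2^{-n}$, so $W_{n,r}\subset U_r$. For (ii), the annular gap between the two thickenings is exactly $2\cdot 2^{-(n+2)} = 2^{-(n+1)}$. For (iii), the $3\cdot 2^{-n}$-separation between cores, minus the two $3\cdot 2^{-(n+2)}$-thickenings, leaves a gap of $3\cdot 2^{-(n+1)} \ge 2^{-(n+1)}$. That $\{V_{n,r}\}$ forms a cover of $E$ is a direct adaptation of the standard Rudin argument: given $x\in E$, letting $r_0$ be the $\prec$-least index with $x\in U_{r_0}$ and $n$ the first scale at which $x$ is captured (i.e., at which the depth condition $3\cdot 2^{-n} \le d(x, E\setminus U_{r_0})$ holds and $x$ is not already excluded), one shows $x\in E_{n,r_0}\subset V_{n,r_0}$.

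The main obstacle is property (iv), which motivates the inductive exclusion above. Given $x\in E$, let $n_x$ be the first $n$ such that $x\in V_{n,r_x}$ for some $r_x$, and choose $s_x := 2^{-(n_x+4)}$. Clause (b) is immediate from (iii): distinct $W_{i,r}, W_{i,r'}$ with $i\le n_x$ are at distance $\ge 2^{-(i+1)} \ge 2^{-(n_x+1)}$, so the ball $B(x,s_x)$ of diameter $<2^{-(n_x+1)}$ meets at most one of them. Clause (a) is the delicate point: for $i>n_x$, every $E_{i,r}$ is disjoint from $W_{n_x,r_x}$ by the exclusion in its definition, so by (ii) it lies at distance $\ge 2^{-(n_x+1)}$ from $V_{n_x,r_x}$; passing to the $3\cdot 2^{-(i+2)}\le 3\cdot 2^{-(n_x+3)}$-thickening still leaves
\[
\dist(V_{n_x,r_x},\, W_{i,r}) \;\ge\; 2^{-(n_x+1)} - 3\cdot 2^{-(n_x+3)} \;=\; 2^{-(n_x+3)},
\]
and since $B(x,s_x)\subset V_{n_x,r_x}$ with $s_x < 2^{-(n_x+3)}$, the ball $B(x,s_x)$ meets no $W_{i,r}$ with $i>n_x$ and $r\in\Omega$, as required.
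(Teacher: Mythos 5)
There is a genuine gap: the family $\{V_{n,r}\}$ produced by your construction need not cover $E$. Being a cover is part of what ``refinement'' means, and it is precisely what the paper extracts from this lemma (in the proof of Lemma \ref{partition1}, the identity $\sum_{n,\gamma}\psi_{n,\gamma}=1$ forces every point to lie in some $V_{n,\gamma}$). The culprit is the exclusion of $\bigcup_{m<n,s}W_{m,s}$ in the definition of the cores. A point $x\in W_{m,s}\setminus V_{m,s}$ is banished from every core $E_{n,r}$ with $n>m$, yet membership in $W_{m,s}$ only gives $d(x,E_{m,s})<3\cdot 2^{-(m+2)}$, not $d(x,E_{m,s})<2^{-(m+2)}$, so such an $x$ need not lie in $V_{m,s}$ nor in any other $V_{n,r}$. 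Your covering argument tacitly assumes that a ``first scale at which $x$ is not already excluded'' exists; it may not. Concretely, take $E=\Real$ with $U_1=(-\infty,1)\prec U_2=\Real$. Then $E_{1,1}=(-\infty,-1/2]$, $V_{1,1}=(-\infty,-3/8)$, $W_{1,1}=(-\infty,-1/8)$, and the point $-3/8$ lies in $W_{1,1}\setminus V_{1,1}$, is removed from every later core, and one checks it belongs to no $V_{n,r}$ at all (every core with $n\ge 2$ avoids $W_{1,1}$, hence lies in $[-1/8,\infty)$, and its $2^{-(n+2)}$-thickening stays in $(-3/16,\infty)$). Properties (i)--(iii) and your separation estimates are otherwise correct; note the paper itself only cites Rudin and gives no proof, so this is the point on which your argument, rather than a comparison, must be judged.

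The standard repair is to exclude by the \emph{inner} sets: define $E_{n,r}$ by deleting $\bigcup_{m<n,s}V_{m,s}$ instead of $\bigcup_{m<n,s}W_{m,s}$. Then Rudin's covering argument goes through verbatim for $\{V_{n,r}\}$: either $x$ was already swallowed by an earlier $V_{m,s}$, or it survives into the core at the first scale where the depth condition holds. Properties (i)--(iii) are unaffected, since they depend only on the radii and on the well-order. The price is paid in (iv): for $i>n_x$ you now only know $E_{i,r}\cap V_{n_1,r_x}=\emptyset$ (where $n_1$ is the first capturing scale), so (ii) no longer yields a lower bound of the form $2^{-(n_x+1)}$ on $\dist(x,E_{i,r})$. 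Instead set $t_x:=\dist(x,E\setminus V_{n_1,r_x})>0$, enlarge $n_x\ge n_1$ until $3\cdot 2^{-(n_x+2)}\le t_x/2$, and put $s_x:=\min\{t_x/4,\,2^{-(n_x+2)}\}$; then for $i>n_x$ one gets $\dist(x,W_{i,r})\ge t_x-3\cdot 2^{-(i+2)}\ge 3t_x/4>s_x$, giving (iv)(a), while (iv)(b) still follows from (iii) because $2s_x\le 2^{-(i+1)}$ for $i\le n_x$. Finally, a small separate slip in your (iv)(a): the inclusion $B(x,s_x)\subset V_{n_x,r_x}$ need not hold when $x$ sits near the boundary of $V_{n_x,r_x}$; it is also unnecessary, since $x\in V_{n_x,r_x}$ together with $\dist(V_{n_x,r_x},W_{i,r})\ge 2^{-(n_x+3)}>s_x$ already gives the conclusion.
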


%%%%%%%%%%%%%%%%%%%%
%%%%%%%%%%%%%%%%%%%%

In the following, we will extend to a certain class of Finsler manifolds, the result on approximation of Lipschitz functions by smooth and Lipschitz  functions defined on separable Riemannian manifolds given in  \cite{AzFeMeRa}. This result is new, even in the case when $M$ is a non-separable Riemannian manifold.

%%%%%%%%%%%
\begin{thm}\label{PalaisFinslerTheorem}
Let $M$ be a $C^\ell$ Finsler $K$-weak-uniform manifold modeled on a Banach space $X$ which admits property $(*^k)$ and the constant $C_0$ does not depend on the (equivalent) norm. For every Lipschitz function $f:M\to\Real$, and any  continuous function $\varepsilon:M\to(0,\infty)$  there is a Lipschitz, $C^m$-smooth
 function $g:M\to\Real$ ($m:={\min\{\ell,k\}}$) such that
\begin{equation*}
\left\vert g(p)-f(p) \right\vert<\varepsilon(p), \  \  || dg(p)||_p\le C_1 \Lip(f) \quad  \text{ for every } p\in M,
\end{equation*}
and therefore, $ \Lip(g)\le C_1 \Lip(f)$, where $C_1:=2C_0 K^2$.
\end{thm}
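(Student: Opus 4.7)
The plan is to follow the strategy used in \cite{AzFeMeRa} for separable Riemannian manifolds, replacing the Riemannian tools by the weak-uniform Finsler machinery developed in Section 2: work locally in charts where property $(*^k)$ can be applied to the pushed-forward function, then glue the local smooth approximations by a smooth Lipschitz partition of unity subordinated to a cover supplied by Rudin's lemma (Lemma~\ref{Rudin}).

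First I would fix a point $p\in M$ and, using Remark~\ref{remark}(\ref{remark-R}), select a chart $(U_p,\varphi_p)$ at $p$ that satisfies the weak-uniform estimate \eqref{ineq-K-weak-uniform} with constant $K$, and such that the conclusion of Lemma~\ref{desigualdades:BiLipschitz} holds on some neighborhood $V_p\subset U_p$ of $p$ with the equivalent norm $|||\cdot|||_p=\|d\varphi_p^{-1}(\varphi_p(p))(\cdot)\|_p$. Shrinking $V_p$ if necessary, I would arrange that $f\circ\varphi_p^{-1}$ is $K\Lip(f)$-Lipschitz on $\varphi_p(V_p)$ with respect to $|||\cdot|||_p$; this uses the bi-Lipschitz estimate \eqref{B-L1}. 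Apply Lemma~\ref{Rudin} to the open cover $\{V_p\}_{p\in M}$ to obtain refinements $\{V_{n,r}\}$, $\{W_{n,r}\}$, with the associated charts $(U_{n,r},\varphi_{n,r})$, base points $x_{n,r}$ and norms $|||\cdot|||_{n,r}$. Extending $f\circ\varphi_{n,r}^{-1}|_{\varphi_{n,r}(W_{n,r})}$ to a $K\Lip(f)$-Lipschitz function on $(X,|||\cdot|||_{n,r})$, and invoking property $(*^k)$ with the norm $|||\cdot|||_{n,r}$ (legitimate because $C_0$ is independent of the equivalent norm), yields a $C^k$-smooth function $K_{n,r}:X\to\mathbb{R}$ with $\Lip_{|||\cdot|||_{n,r}}(K_{n,r})\le C_0K\Lip(f)$ and $|K_{n,r}-f\circ\varphi_{n,r}^{-1}|<\delta_{n,r}$ on $X$, where $\delta_{n,r}>0$ is to be chosen later. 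Setting $h_{n,r}:=K_{n,r}\circ\varphi_{n,r}$ on $U_{n,r}$, a direct computation using the chain rule and \eqref{ineq-K-weak-uniform} gives $|dh_{n,r}(x)(v)|\le C_0K\Lip(f)\cdot|||d\varphi_{n,r}(x)(v)|||_{n,r}\le C_0K^2\Lip(f)\,\|v\|_x$, hence
\[
\|dh_{n,r}(x)\|_x\le C_0K^2\Lip(f)\qquad(x\in U_{n,r}).
\]

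Next I would build a $C^m$-smooth Lipschitz partition of unity $\{\psi_{n,r}\}$ subordinated to $\{W_{n,r}\}$: using property $(*^k)$ on $X$ (which produces Lipschitz $C^k$-smooth bumps), pulling them back through the bi-Lipschitz charts, and normalizing with the aid of property (iv) of Lemma~\ref{Rudin} (local finiteness), exactly as in \cite{Rudin}, \cite{HajekJohanis}, \cite{MarLuis}. Define
\[
g(p):=\sum_{n,r}\psi_{n,r}(p)\,h_{n,r}(p).
\]
The uniform estimate $|g-f|<\varepsilon$ is immediate once the $\delta_{n,r}$ are chosen so that $\delta_{n,r}<\varepsilon(q)$ for all $q\in\supp\psi_{n,r}$, which is possible because $\varepsilon$ is continuous and positive.

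For the derivative estimate I would use the standard trick: since $\sum_{n,r}\psi_{n,r}\equiv 1$ implies $\sum_{n,r}d\psi_{n,r}(p)=0$, at any $p\in M$
\[
dg(p)=\sum_{n,r}\psi_{n,r}(p)\,dh_{n,r}(p)+\sum_{n,r}\bigl(h_{n,r}(p)-f(p)\bigr)\,d\psi_{n,r}(p).
\]
The first sum is bounded in $\|\cdot\|_p$ by $C_0K^2\Lip(f)$ since $\psi_{n,r}(p)\ge 0$ sum to $1$. In the second sum only finitely many terms are non-zero at $p$ (property (iv) of Lemma~\ref{Rudin}), and each factor $|h_{n,r}(p)-f(p)|$ is at most $\delta_{n,r}$; choosing $\delta_{n,r}$ small enough compared to the (finitely many, locally uniform) norms $\|d\psi_{n,r}\|$ near $p$ makes this second sum bounded by $C_0K^2\Lip(f)$. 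Hence $\|dg(p)\|_p\le 2C_0K^2\Lip(f)=C_1\Lip(f)$, and Proposition~\ref{mean:value}(i) then yields $\Lip(g)\le C_1\Lip(f)$.

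The main technical obstacle is coordinating the choice of the approximation errors $\delta_{n,r}$ with the Lipschitz constants of $\psi_{n,r}$, which can grow like $2^{n+1}$ as $n\to\infty$. The way around it is that the sum in the derivative estimate is \emph{locally finite} (condition (iv) of Lemma~\ref{Rudin}), so for each $p$ there is only a finite list of relevant $(n,r)$ and one only needs a pointwise smallness condition, not a uniform one; this lets the $\delta_{n,r}$ be shrunk independently for each chart, accommodating the large partition-of-unity derivatives without harming the global estimate.
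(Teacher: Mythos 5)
Your proposal is correct and follows essentially the same route as the paper: localize via weak-uniform charts made bi-Lipschitz by Lemma~\ref{desigualdades:BiLipschitz}, apply property $(*^k)$ in each norm $|||\cdot|||_\gamma$ (using independence of $C_0$ from the equivalent norm), glue with a Lipschitz $C^m$ partition of unity built from Rudin's refinements, and control $\|dg(p)\|_p$ by the $\sum d\psi_{n,r}=0$ trick with approximation errors $\delta_{n,r}$ chosen inversely proportional to $\sup_M\|d\psi_{n,r}\|$ times a factor summable in $n$. The paper implements exactly this, taking $\delta_{n,\gamma}=\varepsilon_\gamma/(3\cdot 2^{n+2}L_{n,\gamma})$ and proving the partition-of-unity lemma (with $\|d\psi_{n,\gamma}\|_p\le n\,15\,C_0K^2 2^{n+1}$) in detail.
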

%%%%%%%%%%%%
\begin{proof}
We can assume that $L:=\Lip(f)>0$ and
 $0<\varepsilon(p)<C_0K^2L$, for all $p\in M$.
 For every $p\in M$, there is $\delta_p>0$ such that $\varepsilon(p)/3<\varepsilon(q)$ for every $q\in B_M(p,3\delta_p)$ and a $C^\ell$-smooth chart $\varphi_p:B_M(p,3\delta_p)\to X$ with $\varphi_p(p)=0$, satisfying (P1), inequality \eqref{ineq-K-weak-uniform} and inequality \eqref{B-L1} for all point of the ball $B_M(p,3\delta_p)$.
In particular, $\varphi_p$ and $\varphi_p^{-1}$ are Lipschitz with the (equivalent) norm $||d\varphi^{-1}_{p}(0)(\cdot)||_{p}$ considered on $X$, $\Lip(\varphi_p) \le K$ and $\Lip(\varphi_p^{-1}) \le K$.

Let us  consider an  open cover  $\bigcup_{\gamma\in \Gamma} B_M(p_\gamma,\delta_\gamma)$ of $M$,
where $\delta_{\gamma}:=\delta_{p_\gamma}$ for some set of indexes $ \Gamma$.  Also, let us
write $\varphi_\gamma:=\varphi_{p_\gamma}$, $\varepsilon_\gamma:=\varepsilon(p_\gamma)$ and
$|||\cdot|||_\gamma:=||d\varphi^{-1}_{\gamma}(0)(\cdot)||_{p_\gamma}$.
Let us define, for every $\gamma\in \Gamma$,
%%%%%%%%%%%%%%%
\begin{equation*}f_\gamma: \varphi_\gamma(B_M(p_\gamma,3\delta_\gamma))\subset X \to \Real,  \qquad
f_\gamma(x):=f(\varphi_{\gamma}^{-1}(x)),\end{equation*}
%%%%%%%%%%%%%%%%%
 which is $KL$-Lipschitz with the norm $|||\cdot|||_\gamma$.
By Lemma \ref{Rudin}, there are open refinements $\{V_{n,\gamma}\}_{n\in\mathbb{N},\gamma\in\Gamma}$ and $\{W_{n,\gamma}\}_{n\in\mathbb{N},\gamma\in\Gamma}$ of $\{B_M(p_\gamma,2\delta_\gamma)\}_{\gamma\in\Gamma}$ satisfying  properties $(i)-(iv)$ of Lemma \ref{Rudin}.

%%%%%%%%%

Now, we need the following lemma related to the existence of smooth and Lipschitz  partitions of unity on a manifold $M$.
First, let us recall  the definition of a smooth and Lipschitz partitions of unity.

%%%%%%%%%%%%%%%

\begin{defn} A collection of real-valued, $C^k$-smooth and Lipschitz functions $\{\psi_i\}_{i\in I}$  defined on a Finsler manifold $M$ is a $C^k$-smooth and Lipschitz partition of unity  subordinated to the open cover $\mathcal{U}=\{U_r\}_{r\in \Omega}$ of $M$ whether
 (1)   $\psi_i\ge 0$ on $M$ for every $i\in I$, (2) the family $\{\supp (\psi_i)\}_{i\in I}$ is locally finite, where $\supp (\psi_i) =
 \overline{\{x\in M: \psi_i(x)\neq 0\}}$, i. e. for every $x\in M$ there is an open neighborhood $U$ of $x$ and a finite subset $J\subset I$ such that
 $\supp (\psi_i)\cap U=\emptyset$ for every  $i\in I\setminus J$,
 (3) for
 every $i\in I$ there is $r\in \Omega$ such that $\supp (\psi_i)\subset  U_{r}$, and (4) $\sum_{i\in I} \psi_i(x)=1$ for every
 $x\in M$.
\end{defn}

%%%%%%%%%%%%%

\begin{lem}\label{partition1}
Under the assumptions of Theorem \ref{PalaisFinslerTheorem}, there is a $C^m$-smooth partition of unity of $M$  $\{\psi_{n,\gamma}\}_{n\in\mathbb{N}, \gamma\in\Gamma}$
such that $\supp(\psi_{n,\gamma}) \subset W_{n,\gamma}$
and $\psi_{n,\gamma}$ is Lipschitz for every
$n\in \mathbb N$, $\gamma \in \Gamma$. In fact,   $||d\psi_{n,\gamma}(p)||_p
\le n15 C_0K^22^{n+1}$ for all $p\in M$, and thus $\Lip(\psi_{n,\gamma})\le  n15 C_0K^22^{n+1}$ for all  $n\in \mathbb N$ and
$\gamma \in \Gamma$.
\end{lem}

%%%%%%%%%%%%%%%%
Let us assume that  Lemma \ref{partition1} has been proved.  Let us denote by $L_{n,\gamma}:=
\max\{1,\sup\{||d\psi_{n,\gamma}(p)||_p: \, {p\in M}\}\}$. Since $X$ admits property $(*^k)$ and the constant $C_0$ does not depend
on the equivalent norm considered on $X$,  there is a $C^k$-smooth, Lipschitz function
$g_{n,\gamma}:X\to \Real$ such that
\begin{equation*}
|g_{n,\gamma}(x)-{f}_{\gamma}(x)|\le \frac{\varepsilon_\gamma/3}{2^{n+2} L_{n,\gamma}} \ \text{ for all }\ x\in
 \varphi_\gamma(B_M(p_\gamma,3\delta_\gamma))
\end{equation*}
and $\Lip(g_{n,\gamma})\le C_0 \Lip({f}_{\gamma})\le  C_0 K L$ with the norm $|||\cdot|||_\gamma$ on $X$.
Let us define the function $g:M\to \Real$ as
\begin{equation*}
g(p):=\sum_{n\in\mathbb{N}, \gamma\in \Gamma} \psi_{n,\gamma}(p)g_{n,\gamma}(\varphi_{\gamma}(p)),
\quad  p\in M.
\end{equation*}
Now, if $p\not\in B_M(p_\gamma, 2\delta_\gamma)$, then $\psi_{n,\gamma}(p)=0$ and $\psi_{n,\gamma}(p)g_{n,\gamma}(\varphi_{\gamma}(p))=0$. Since  $\supp (\psi_{n,\gamma})\subset W_{n,\gamma}\subset B_M(p_\gamma,2\delta_\gamma)$,
it is clear that $p\mapsto \psi_{n,\gamma}(p)g_{n,\gamma}(\varphi_{\gamma}(p))$ is $C^m$-smooth on $M$, for each $n\in\mathbb{N}$ and $\gamma\in \Gamma$.
 Moreover, $\{\supp (\psi_{n,\gamma})\}_{n\in \mathbb N, \gamma \in \Gamma}$ is locally finite,
    and thus $g$ is well defined and  $C^m$-smooth on $M$. \newline
Note that, if $\psi_{n,\gamma}(p)\neq 0$, then $p\in \supp (\psi_{n,\gamma})\subset B_M(p_\gamma,2\delta_\gamma)$ and thus $f(p)={f}_\gamma(\varphi_\gamma(p))$. Hence,
\begin{align*}
 &|g(p)-f(p)|=&\\
 & =|\sum_{n\in\mathbb{N},\gamma\in \Gamma} \psi_{n,\gamma}(p)g_{n,\gamma}(\varphi_{\gamma}(p))-f(p)|= |\sum_{n\in\mathbb{N},\gamma\in \Gamma} \psi_{n,\gamma}(p)(g_{n,\gamma}(\varphi_{\gamma}(p))-f(p))|  =&\\
&= |\sum_{\{(n,\gamma) : \psi_{n,\gamma}(p)\neq0 \}} \psi_{n,\gamma}(p)(g_{n,\gamma}(\varphi_{\gamma}(p))-{f}_\gamma(\varphi_{\gamma}(p)))| \le  &\\
& \le \sum_{\{(n,\gamma) : \psi_{n,\gamma}(p)\neq0 \}} \psi_{n,\gamma}(p)  \frac{\varepsilon_\gamma/3}{2^{n+2}  L_{n,\gamma}}<\varepsilon(p).&
\end{align*}
Let us check that $g$ is  $2C_0 K^2 L$-Lipschitz on $M$.
Recall that $\sum_{\mathbb N\times \Gamma}\psi_{n,\gamma}(p)= 1$  for all $p\in M$, and thus $\sum_{\mathbb N\times \Gamma}d\psi_{n,\gamma}(p)=0$ for all $p\in M$. Also, properties (i) and (ii) of the open refinement $\{W_{n,\gamma}\}_{n\in \mathbb N, \gamma \in \Gamma}$ imply that for every $p\in M$  and $n\in\mathbb{N}$,
there is at most one $\gamma\in\Gamma$, which we shall denote by $\gamma_p(n)$,  such that $p\in \supp  (\psi_{n,\gamma})$. Let us  define the finite set $F_p:=\{(n, \gamma)\in \mathbb N \times \Gamma:
 p\in \supp(\psi_{n,\gamma})\}=\{(n, \gamma_p(n))\in \mathbb N \times \Gamma:
 p\in \supp(\psi_{n,\gamma_p(n)})\}$.
Recall that, if we consider the norm $|||\cdot |||_\gamma$ on $X$, then $\Lip(g_{n,\gamma})\le  C_0 K L $ and thus
$|||dg_{n,\gamma}(x)|||:=\sup\{ |dg_{n,\gamma}(x)(v)| : |||v|||_\gamma\le 1\} \le  C_0 K L$ for all $x\in X$.
Also, $|||d\varphi_{\gamma}(p)|||:=\sup\{|||d\varphi_{\gamma}(p)(v)|||_\gamma: ||v||_p\le 1\}\le K$ whenever $p\in B(p_\gamma, 3\delta_{\gamma})$.
Therefore, we obtain that $||d(g_{n,\gamma}\circ\varphi_{\gamma})(p)||_p
\le C_0K^2L$ whenever $p \in  B(p_\gamma, 3\delta_{\gamma})$ and
\begin{align*}
&||dg(p)||_p = ||\sum_{(n,\gamma)\in F_p} g_{n,\gamma}(\varphi_{\gamma}(p)) d\psi_{n,\gamma}(p)+
\sum_{(n,\gamma)\in F_p}\psi_{n,\gamma}(p) d(g_{n,\gamma}\circ\varphi_{\gamma})(p)||_p=\\
&=||\sum_{ (n,\gamma)\in F_p} (g_{n,\gamma}(\varphi_{\gamma}(p))-f(p)) d\psi_{n,\gamma}(p)+
\sum_{(n,\gamma)\in F_p}\psi_{n,\gamma}(p) d(g_{n,\gamma}\circ\varphi_{\gamma})(p)||_p\le \\
& \le  \sum_{(n,\gamma) \in F_p} |g_{n,\gamma}(\varphi_{\gamma}(p))-{f}_\gamma(\varphi_{\gamma}(p))| \,||d \psi_{n,\gamma}(p)||_p  + \sum_{(n,\gamma)\in F_p}\psi_{n,\gamma}(p)C_0 K^2L   \le \\
 &  \le \sum_{\{n:\,(n,\gamma_p(n))\in F_p\}} \frac{\varepsilon(p)}{2^{n+2} L_{n,\gamma_p(n)}}  L_{n,\gamma_p(n)}+C_0
K^2L \le \\
 & \le \varepsilon(p)/4+C_0K^2L<2C_0K^2L.
\end{align*}
Finally, by Proposition \ref{mean:value}(i), $\Lip(g) \le \sup\{||dg(p)||_p : p\in M\}\le 2C_0 K^2 L$ which finishes the proof of Theorem \ref{PalaisFinslerTheorem}.

%%%%%%%%%%%%%%%%

\medskip

%%%%%%%%%%%%%%%%

Now, let us prove Lemma \ref{partition1}.
Let us consider the two refinements $\{V_{n,\gamma}\}_{n\in\mathbb{N},\gamma\in\Gamma}$ and $\{W_{n,\gamma}\}_{n\in\mathbb{N},\gamma\in\Gamma}$ of $\{B_M(p_\gamma,2\delta_\gamma)\}_{\gamma\in\Gamma}$ satisfying the properties $(i)-(iv)$ of Lemma \ref{Rudin}. Recall that $\dist_M(V_{n,\gamma},M\setminus W_{n,\gamma})\ge {1}/{2^{n+1}}$ and
$\dist_M(W_{n,\gamma},W_{n,\gamma'})\geq 1/2^{n+1}$ for  every $\gamma,\,\gamma'\in\Gamma$, $\gamma\not=\gamma'$, and  every $n\in\mathbb{N}$. Also, recall that $\varphi_{\gamma}:B_M(p_\gamma,3\delta_\gamma)\to \varphi_\gamma(B_M({p_\gamma},3\delta_\gamma)) := \widetilde{B}_\gamma \subset X$ satisfies
%%%%%%%%%%%%%%
\begin{equation*}
\frac{1}{K} d_M(p,q)\le |||\varphi_{\gamma}(p)-\varphi_{\gamma}(q)|||_\gamma\le K d_M(p,q), \quad \text{ for } p,q\in B_M(p_\gamma,3\delta_\gamma).
\end{equation*}
%%%%%%%%%%%%%%
Let us denote $\widetilde{V}_{n,\gamma}:=\varphi_{\gamma}(V_{n,\gamma})$ and $\widetilde{W}_{n,\gamma}:=\varphi_{\gamma}(W_{n,\gamma})$. Clearly,  $\widetilde{V}_{n,\gamma}\subset \widetilde{W}_{n,\gamma}\subset \varphi_\gamma(B_M({p_\gamma},3\delta_\gamma))=\widetilde{B}_\gamma  \subset X$. Also, for every $x\in  \widetilde{V}_{n,\gamma}\subset X$ and $y\in \widetilde{B}_\gamma \setminus  \widetilde{W}_{n,\gamma}\subset X$, there are $p\in V_{n,\gamma}$ and $q\in B_M(p_\gamma,3\delta_\gamma)\setminus W_{n,\gamma}$ such that $\varphi_{\gamma}(p)=x$ and $\varphi_{\gamma}(q)=y$. Thus, $|||x-y|||_\gamma=|||\varphi_{\gamma}(p)-\varphi_{\gamma}(q)|||_\gamma  \ge \frac{1}{K}  d_M(p,q)\ge \frac{1}{K2^{n+1}}$.
Let us define $\dist_\gamma(A,B):=\inf\{|||x-y|||_\gamma: x\in A \ \text{and}\ y\in B\}$ for any pair of subsets $A,B\subset X$.
Then,   $\dist_\gamma(\widetilde{V}_{n,\gamma},\widetilde{B}_\gamma \setminus \widetilde{W}_{n,\gamma})\ge \frac{1}{K2^{n+1}}$.

Let us define $\phi_{n,\gamma}:X\to \Real$ as $\phi_{n,\gamma}(x)=\dist_\gamma (x, \widetilde{V}_{n,\gamma})$. Then,
 $\phi_{n,\gamma}(\widetilde{V}_{n,\gamma})=0$ and $\inf \phi_{n,\gamma}(\widetilde{B}_\gamma \setminus  \widetilde{W}_{n,\gamma})\ge \frac{1}{K 2^{n+1}}$. Let us take a  Lipschitz function $\theta_n:\Real\to [0,1]$ such that
$\theta_n(t)=1$ for $t<\frac{1}{4 K2^{n+1}}$, $\theta_n(t)=0$  for
$t>\frac{1}{2K2^{n+1}}$
with $\Lip(\theta_n)\le 5K2^{n+1}$.
Then, $(\theta_n\circ \phi_{n,\gamma})(\widetilde{V}_{n,\gamma})=1$,  $(\theta_n\circ \phi_{n,\gamma})(\widetilde{B}_\gamma \setminus \widetilde{W}_{n,\gamma})=0$ and $\Lip(\theta_n\circ\phi_{n,\gamma})\le 5 K2^{n+1}$ (with the norm $|||\cdot|||_\gamma$).

Now, by property $(*^k)$, we can find  $C^k$-smooth and Lipschitz functions
 $\xi_{n,\gamma}: X\to \Real$ such that
 %%%
 \begin{equation*}
 \sup_{y\in X}\{|\xi_{n,\gamma}(y)-(\theta_n\circ \phi_{n,\gamma})(y)|\}<1/4 \quad \text{ and } \quad
 \Lip(\xi_{n,\gamma})\le C_0\Lip(\theta_n\circ\phi_{n,\gamma}),
 \end{equation*}
 %%%
  with the norm $|||\cdot|||_\gamma$, for every $\gamma\in\Gamma$ and $n\in\mathbb{N}$.

Let us take a $C^\infty$-smooth Lipschitz function $\theta:\Real\to[0,1]$ such that $\theta(t)=0$ whenever $t<\frac{1}{4}$,  $\theta(t)=1$ whenever $t>\frac{3}{4}$ and  $\Lip(\theta)\le 3$.
Let us define $\widetilde{h}_{n,\gamma}:X\to [0,1]$ as $\widetilde{h}_{n,\gamma}(x)=\theta(\xi_{n,\gamma}(x))$, for every $n\in\mathbb{N}$ and $\gamma\in\Gamma$.
Then, $\widetilde{h}_{n,\gamma}(x)$ is $C^k$-smooth,  $\Lip(\widetilde{h}_{n,\gamma})\le 15 C_0K2^{n+1} $ (with the norm $|||\cdot|||_\gamma$), $\widetilde{h}_{n,\gamma}(\widetilde{V}_{n,\gamma})=1$ and $\widetilde{h}_{n,\gamma}(\widetilde{B}_\gamma \setminus \widetilde{W}_{n,\gamma})=0$.

Now, let us define $h_{n,\gamma}:M\to [0,1]$ as
%%%
\begin{equation*}
h_{n,\gamma}(p)=
\begin{cases}
\widetilde{h}_{n,\gamma}(\varphi_{\gamma}(p)) & \text{if $p\in B_M(p_\gamma,3\delta_\gamma)$},\\
0 & \text{otherwise}.
\end{cases}
\end{equation*}
%%%
Then, the function $h_{n,\gamma}$ is $C^m$-smooth, $\supp(h_{n,\gamma})\subset W_{n,\gamma} \subset B_M(p_\gamma,2\delta_\gamma)$, $||dh_{n,\gamma}(p)||_p\le 15 C_0K^22^{n+1}$ for every $p\in M$ and  thus $\Lip(h_{n,\gamma})\le 15 C_0K^22^{n+1}$.

Let us  define $h_n: M\rightarrow \mathbb R$ as $h_n(p)=\sum_{\gamma\in\Gamma} h_{n,\gamma}(p)$, for every $n\in\mathbb{N}$.  Since $\dist(W_{n,\gamma},W_{n,\gamma'})>0$ whenever $\gamma\neq \gamma'$, we deduce that $h_n$ is $C^m$-smooth. Also, $h_n(\bigcup_{\gamma\in\Gamma} V_{n,\gamma})=1$ and $h_n(M\setminus \bigcup_{\gamma\in \Gamma} W_{n,\gamma})=0$. In addition, $||dh_n(p)||_p\le 15 C_0K^22^{n+1}$ for every $p\in M$ and thus  $\Lip(h_n)\le  15 C_0K^22^{n+1}$.
%%%%%%%%%%%%%%%%
Finally, let us define
%%%%%%%%%%%%%%%%%
\begin{equation*}
\psi_{1,\gamma}=h_{1,\gamma} \quad \text{ and }  \quad  \psi_{n,\gamma}=h_{n,\gamma}(1-h_1)\cdots(1-h_{n-1}), \quad
 \text{for }  n\ge 2.
 \end{equation*}
Clearly the functions $\{ \psi_{n,\gamma}\}_{n\in \mathbb N, \gamma \in \Gamma}$ are  $C^m$-smooth functions,
$||d\psi_{n,\gamma}(p)||_p \le  n15 C_0K^22^{n+1}$ for all $p \in M$  (and thus
$\Lip(\psi_{n,\gamma}) \le  n15 C_0K^22^{n+1}$),
$\supp (\psi_{n,\gamma})\subset \supp (h_{n,\gamma})\subset W_{n,\gamma}\subset B_M(p_\gamma,2\delta_\gamma)$.
In addition, for every $p\in M$,
%%%%%%%%%%%%
\begin{align*}
\sum_{n\in\mathbb{N},\gamma\in\Gamma} \psi_{n,\gamma}(p)&=\sum_{\gamma\in\Gamma} \psi_{1,\gamma}(p)+
\sum_{n\ge 2}\left(\sum_{\gamma\in\Gamma}h_{n,\gamma}(p)\right)\prod_{i=1}^{n-1}(1-h_i(p))=\\
&= h_1(p)+\sum_{n\ge2} h_n(p)\prod_{i=1}^{n-1}(1-h_i(p))=1.
\end{align*}
%%%%%%%%%%%
Hence,  $\{\psi_{n,\gamma}\}_{n\in\mathbb{N},\gamma\in\Gamma}$ is a $C^m$-smooth  partition of unity subordinated to the open cover $\{W_{n,\gamma}\}_{n\in\mathbb{N},\gamma\in\Gamma}$ of $M$ with  $||d\psi_{n,\gamma}(p)||_p
\le n15 C_0K^22^{n+1}$ and $\Lip(\psi_{n,\gamma})\le  n15 C_0K^22^{n+1}$  for all $p\in M$,  $n\in \mathbb N$ and
$\gamma \in \Gamma$. This finishes the proof of Lemma \ref{partition1}. \end{proof}

%%%%%%%%%%%
If we do not assume that the constant $C_0$ is independent of the (equivalent) norm considered in the Banach space $X$, a  similar result to Theorem \ref{PalaisFinslerTheorem} can be obtained for smooth Finsler manifolds in the sense of Neeb-Upmeier $K$-uniform  modeled on a Banach space $X$.

%%%%%%%%%%%%

\begin{thm}\label{FinslerTheorem}
Let $M$ be a $C^\ell$ Finsler manifold in the sense of Neeb-Upmeier $K$-uniform, modeled on a Banach space $(X,||\cdot||)$ which admits property $(*^k)$. For every Lipschitz function $f:M\to\Real$, any  continuous function $\varepsilon:M\to(0,\infty)$  there is a Lipschitz, $C^m$-smooth function $g:M\to\Real$  ($m:={\min\{\ell,k\}}$)  such that
%%%%%%%%%%%%%%
\begin{equation*}
|g(p)-f(p)|<\varepsilon(p),   \  \  || dg(p)||_p\le C_1 \Lip(f) \quad  \text{ for every } p\in M,
\end{equation*}
and thus $ \Lip(g)\le C_1 \Lip(f)$, where $C_1:=2C_0 K^2$
and $C_0$ is the constant given by property $(*^k)$.
\end{thm}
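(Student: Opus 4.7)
The plan is to mimic the proof of Theorem \ref{PalaisFinslerTheorem} essentially verbatim, with the crucial simplification that the $K$-uniform hypothesis lets us use the \emph{original} norm $\|\cdot\|$ on $X$ throughout, rather than the point-dependent pullback norms $|||\cdot|||_\gamma=\|d\varphi_\gamma^{-1}(0)(\cdot)\|_{p_\gamma}$. This is precisely why we no longer need the constant $C_0$ of property $(*^k)$ to be independent of the chosen equivalent norm: property $(*^k)$ is invoked only with the fixed norm $\|\cdot\|$.

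Concretely, for each $p\in M$ I would pick $\delta_p>0$ small enough that $\varepsilon(p)/3<\varepsilon(q)$ for every $q\in B_M(p,3\delta_p)$ and, appealing to Lemma \ref{desigualdades:BiLipschitz}(2), a $C^\ell$-smooth chart $\varphi_p\colon B_M(p,3\delta_p)\to X$ with $\varphi_p(p)=0$ satisfying \eqref{ineq-S-uniform} and the bi-Lipschitz estimate
\begin{equation*}
\tfrac{1}{K}\,d_M(p,q)\le \|\varphi_p(p)-\varphi_p(q)\|\le K\,d_M(p,q)
\end{equation*}
with respect to $\|\cdot\|$. Choose a cover $\{B_M(p_\gamma,\delta_\gamma)\}_{\gamma\in\Gamma}$, write $\varphi_\gamma:=\varphi_{p_\gamma}$, $\varepsilon_\gamma:=\varepsilon(p_\gamma)$, and set $f_\gamma:=f\circ\varphi_\gamma^{-1}$, which is then $KL$-Lipschitz on $\varphi_\gamma(B_M(p_\gamma,3\delta_\gamma))\subset X$ with $\|\cdot\|$ (where $L:=\Lip(f)$). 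Apply Lemma \ref{Rudin} to the cover $\{B_M(p_\gamma,2\delta_\gamma)\}_{\gamma\in\Gamma}$ to obtain refinements $\{V_{n,\gamma}\}$ and $\{W_{n,\gamma}\}$.

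Next I would build a $C^m$-smooth Lipschitz partition of unity $\{\psi_{n,\gamma}\}$ subordinated to $\{W_{n,\gamma}\}$ following the construction of Lemma \ref{partition1}, but with all auxiliary objects defined with the fixed norm $\|\cdot\|$: set $\phi_{n,\gamma}(x):=\dist(x,\varphi_\gamma(V_{n,\gamma}))$ where $\dist$ refers to $\|\cdot\|$; use the bi-Lipschitz estimate for $\varphi_\gamma$ above to conclude $\dist(\varphi_\gamma(V_{n,\gamma}),\,\varphi_\gamma(B_M(p_\gamma,3\delta_\gamma))\setminus\varphi_\gamma(W_{n,\gamma}))\ge 1/(K\,2^{n+1})$; compose with a Lipschitz cut-off $\theta_n$; smooth via property $(*^k)$ using the \emph{single} norm $\|\cdot\|$; and finally compose with $\varphi_\gamma$. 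The same computation as in Lemma \ref{partition1} yields $\|d\psi_{n,\gamma}(p)\|_p\le n\cdot 15\,C_0K^2\,2^{n+1}$.

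Now I would apply property $(*^k)$ to each $f_\gamma$ (extended to $X$ preserving Lipschitz constant with $\|\cdot\|$) to get $C^k$-smooth, $C_0KL$-Lipschitz functions $g_{n,\gamma}\colon X\to\Real$ with $|g_{n,\gamma}-f_\gamma|<\varepsilon_\gamma/(3\cdot 2^{n+2}L_{n,\gamma})$ on $\varphi_\gamma(B_M(p_\gamma,3\delta_\gamma))$, where $L_{n,\gamma}=\max\{1,\sup_p\|d\psi_{n,\gamma}(p)\|_p\}$. Define $g(p):=\sum_{n,\gamma}\psi_{n,\gamma}(p)\,g_{n,\gamma}(\varphi_\gamma(p))$. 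The uniform estimate $|g-f|<\varepsilon$ is identical to the previous theorem. For the derivative estimate, the key bound is $\|d(g_{n,\gamma}\circ\varphi_\gamma)(p)\|_p\le C_0K^2L$, which uses $\|dg_{n,\gamma}(x)\|_*\le C_0KL$ with the dual of $\|\cdot\|$ together with $\|d\varphi_\gamma(p)\|_p\le K$ from \eqref{ineq-S-uniform}; exactly as before this yields $\|dg(p)\|_p\le \varepsilon(p)/4+C_0K^2L<2C_0K^2L$, and Proposition \ref{mean:value}(i) concludes $\Lip(g)\le 2C_0K^2L$.

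There is no substantial new obstacle beyond Theorem \ref{PalaisFinslerTheorem}; the main bookkeeping task is to verify at each invocation of property $(*^k)$ and each Lipschitz estimate that the underlying norm on $X$ is the \emph{fixed} norm $\|\cdot\|$, which is legitimate precisely because the $K$-uniform condition bounds $\|d\varphi_\gamma\|$ and $\|d\varphi_\gamma^{-1}\|$ with respect to $\|\cdot\|$ uniformly on a neighborhood of $p_\gamma$.
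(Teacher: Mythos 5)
Your proposal is correct and is essentially the paper's own proof: the authors explicitly state that Theorem \ref{FinslerTheorem} follows along the same lines as Theorem \ref{PalaisFinslerTheorem}, with the fixed norm $||\cdot||$ replacing the pullback norms $|||\cdot|||_\gamma$ throughout, which is exactly the substitution you carry out (via Lemma \ref{desigualdades:BiLipschitz}(2) and condition \eqref{ineq-S-uniform}). Your bookkeeping of where property $(*^k)$ is invoked, and why a norm-independent $C_0$ is no longer needed, matches the intended argument.
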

%%%%%%%%%%%%%%%
 The proof of Theorem \ref{FinslerTheorem} follows along the same lines as that for Theorem
\ref{PalaisFinslerTheorem}. Let us  indicate that,
in this case,  throughout the proof the norm considered in $X$  is
 $||\cdot||$ (instead of $|||\cdot|||_\gamma$).

%%%%%%%%%%%%%%%%%%%%%%%%%%%
\section{Corollaries}
In this section we will give several corollaries of Theorem \ref{PalaisFinslerTheorem}.
The first collorary is related to the concept of {\em uniformly bumpable Banach manifold}.
Let us recall that the existence of  smooth and Lipschitz bump functions on a Banach space is an essential tool to  obtain approximation of Lipschitz functions by Lipschitz and smooth functions defined on Banach spaces (see \cite{azafrymon, Fry1, HajekJohanis}). A generalization of this concept to manifolds is the notion of  \emph{uniformly bumpable manifold}, which was introduced by Azagra, Ferrera and L\'opez-Mesas \cite{AzFeMe} for Riemannian manifolds. A natural extension to every  \emph{Finsler manifold} can be defined in the same way, as follows.
%%%%
\begin{defn}\label{bumpable}
A  $C^\ell$  Finsler manifold $M$ in the sense of Neeb-Upmeier  is \textbf{$C^k$-uniformly bumpable} (with $k\le \ell$) whenever there are $R>1$ and $r>0$ such that for every $p\in M$ and $\delta\in (0,r)$ there exists a $C^k$-smooth function $b:M\to[0,1]$ such that:
\begin{enumerate}
\item $b(p)=1$,
\item $b(q)=0$ whenever $d_M(p,q)\ge \delta$,
\item $\sup_{q\in M} ||d b(q)||_q \le R/\delta$.
\end{enumerate}
\end{defn}
%%%%

Note that this is not a restrictive definition. In fact,  Azagra,   Ferrera,   L\'opez-Mesas and  Rangel \cite{AzFeMeRa} proved that  every separable Riemannian manifold is $C^\infty$-uniformly bumpable. Now, we can show that a rich class of Finsler manifolds, which includes  every  Riemannian manifold (separable or non-separable),  is uniformly bumpable. This result answers a problem posed in \cite{AzFeMe, AzFeMeRa, GaJaRa}.

%%%%
\begin{cor}\label{approximation:uniformly}
Let $M$ be a  $C^\ell$ Finsler manifold  in the sense of  Neeb-Upmeier satisfying one of the following conditions:
\begin{enumerate}
\item $M$ is $K$-weak-uniform and it is  modeled on a Banach space $X$ which admits property $(*^k)$ and the constant $C_0$ does not depend on the norm.
\item $M$ is $K$-uniform and it is modeled on a Banach space $X$ which admits property $(*^k)$.
\end{enumerate}
Then, $M$ is $C^m$-uniformly bumpable  with $m:=\min\{\ell,k\}$.
\end{cor}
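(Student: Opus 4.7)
The plan is to construct the bump as a composition $b=\theta\circ g$, where $g$ is a smooth Lipschitz approximation of a ``plateau'' function built from the Finsler distance and $\theta$ is a fixed smooth cut-off on $\Real$. Under hypothesis (1) the relevant approximation result is Theorem~\ref{PalaisFinslerTheorem}, and under hypothesis (2) it is Theorem~\ref{FinslerTheorem}; in either case the theorem produces a $C^m$-smooth Lipschitz approximation whose pointwise differential is bounded by $2C_0K^2\Lip(f)$ in the Finsler norm, which is exactly what the definition of uniform bumpability requires.

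More concretely, fix $p\in M$ and $\delta>0$, and define
\begin{equation*}
f_{p,\delta}(q):=\min\bigl\{1,\max\{0,\,2-2d_M(p,q)/\delta\}\bigr\}.
\end{equation*}
Then $f_{p,\delta}\equiv 1$ on $B_M(p,\delta/2)$, $f_{p,\delta}\equiv 0$ outside $B_M(p,\delta)$, and $\Lip(f_{p,\delta})\le 2/\delta$. Applying the appropriate theorem with constant error $\varepsilon\equiv 1/4$ yields a $C^m$-smooth function $g:M\to\Real$ with $|g(q)-f_{p,\delta}(q)|<1/4$ for every $q\in M$ and $\|dg(q)\|_q\le 2C_0K^2\cdot (2/\delta)=4C_0K^2/\delta$. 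In particular $g(p)>3/4$ and $g(q)<1/4$ whenever $d_M(p,q)\ge \delta$.

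Fix once and for all a $C^\infty$ function $\theta:\Real\to[0,1]$ with $\theta(t)=0$ for $t\le 1/4$, $\theta(t)=1$ for $t\ge 3/4$, and $\Lip(\theta)\le 3$, and set $b:=\theta\circ g$. The chain rule combined with the previous estimates gives that $b$ is $C^m$-smooth, $b(p)=\theta(g(p))=1$, $b(q)=\theta(g(q))=0$ whenever $d_M(p,q)\ge \delta$, and
\begin{equation*}
\|db(q)\|_q=|\theta'(g(q))|\,\|dg(q)\|_q\le 12C_0K^2/\delta.
\end{equation*}
Taking $R:=12C_0K^2$ and any fixed $r>0$, all three clauses of Definition~\ref{bumpable} are verified uniformly in $p\in M$ and $\delta\in(0,r)$, so $M$ is $C^m$-uniformly bumpable. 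The only point that requires attention is matching the approximation tolerance to the cut-off thresholds; the choice $\varepsilon\equiv 1/4$ together with $\theta$ jumping between $1/4$ and $3/4$ makes $R$ independent of both $p$ and $\delta$, which is precisely what promotes pointwise bump construction to genuine \emph{uniform} bumpability.
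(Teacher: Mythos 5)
Your proof is correct and follows essentially the same route as the paper: approximate a Lipschitz cut-off of the Finsler distance $d_M(\cdot,p)$ via Theorem~\ref{PalaisFinslerTheorem} (resp.\ Theorem~\ref{FinslerTheorem}) with tolerance $1/4$, then compose with a fixed smooth $\theta$ jumping between $1/4$ and $3/4$. The only difference is cosmetic: the paper uses the tent function $1-d_M(\cdot,p)/\delta$ with Lipschitz constant $1/\delta$, so its final constant is $R=6C_0K^2$ rather than your $12C_0K^2$ coming from the plateau with constant $2/\delta$.
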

%%%%
\begin{proof}
The  assertion follows from Theorem \ref{PalaisFinslerTheorem} and Theorem \ref{FinslerTheorem}
 in a similar way to the Riemannian case.
Let us give the proof under the assumption given in (1)  for completeness. For every $r>0$,  $0<\delta< r$ and $p\in M$, let us define the function $f:M\to [0,1]$ such that
%%%%%
\begin{equation*}
f(q)=
\begin{cases}
1-\frac{d_M(q,p)}{\delta}Ê& \text{if $d_M(q,p)\le \delta$,}\\
0 & \text{if $d_M(q,p)\ge \delta$}.\\
\end{cases}
\end{equation*}
%%%%%%
The function $f$ is $\frac{1}{\delta}$-Lipschitz, $f(p)=1$ and $f(q)=0$ whenever $q\not\in B_M(p,\delta)$.
Let us define $R:=3KC_1$   (where $C_1=2C_0K^2$ is the constant given in  Theorem \ref{PalaisFinslerTheorem})
and take $\varepsilon:=\frac{1}{4}$. By Theorem \ref{PalaisFinslerTheorem}, there is a $C^m$-smooth function $g:M\to\Real$ such that  $\sup_{p\in M}||dg(p)||_p\le C_1 \Lip(f)$,  thus  $\Lip(g)\le C_1 \Lip(f) =\frac{C_1}{\delta}$ and $|g(q)-f(q)|<\frac{1}{4}$ for every $q\in M$.
Let us take a suitable $C^\infty$-smooth and Lipschitz function $\theta:\Real\to[0,1]$  such that $\theta(t)=0$ whenever $t\le \frac{1}{4}$ and $\theta(t)=1$ for $t\ge \frac{3}{4}$ (with $\Lip(\theta)\le 3$). Let us  define $b(q)=\theta(g(q))$ for $q\in M$. It is
clear that  $b$ is $C^m$-smooth, $\sup_{p\in M}||db(p)||_p\le \frac{3C_1}{\delta}$, and thus
$\Lip(b)\le \frac{3C_1}{\delta}$,  $b(p)=1$ and $b(q)=0$ for $q\not\in B_M(p,\delta)$. Finally, we define
 $R:= 3C_1=6C_0K^2$ and this finishes the proof.
\end{proof}

%%%%%%%%%%%%%%%%%%%%%%
%%%%%%%%%%%%%%%%%%%%%%

The results given so far provide some interesting consequences on Riemannian manifolds.
The following corollary provides a generalization (in the  $C^1$-smoothness case) to the  non-separable setting
of the result given in \cite{AzFeMeRa}  for separable   Riemannian manifolds.
  %%%%
\begin{cor}
Let $M$ be a Riemannian manifold. Then, for every Lipschitz function $f:M\to\Real$, every continuous function  $\varepsilon: M\to (0,\infty)$ and $r>0$ there is a $C^1$-smooth and Lipschitz function $g:M\to\Real$ such that $|g(p)-f(p)|<\varepsilon(p)$ for every
$p\in M$ and $\Lip(g)\le \Lip(f) +r$.
\end{cor}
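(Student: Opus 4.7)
The plan is to apply Theorem \ref{PalaisFinslerTheorem}, taking advantage of the two degrees of freedom available in the Riemannian setting: the constant $K$ in the weak-uniform condition, and the size of the approximation tolerance. First I would reduce to the case $L:=\Lip(f)>0$, since a $0$-Lipschitz function on $M$ is locally constant on connected components, hence $C^1$-smooth, so $g:=f$ does the job trivially.

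Next I would collect the two structural facts. By Remark \ref{remark}(2), a Riemannian manifold is a $C^\infty$ Finsler manifold in the sense of Palais; by Remark \ref{remark}(\ref{remark-k}), this implies that $M$ is $K$-weak-uniform for every $K>1$. Moreover, $M$ is modeled on a Hilbert space $H$, and by the Lasry--Lions sup-inf convolution construction $H$ satisfies property $(*^1)$ with constant $C_0=1$, uniformly across all Hilbertian norms on $H$ (see item 2 of the Remark following Definition 3.1). Thus the hypotheses of Theorem \ref{PalaisFinslerTheorem} are satisfied with $C_0=1$ and with $K$ arbitrarily close to $1$.

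The final step is a careful choice of parameters. Given $r>0$, I would pick $K>1$ so that $K^2 L \le L + r/2$, i.e. $K \le \sqrt{1+r/(2L)}$, and then apply the construction in the proof of Theorem \ref{PalaisFinslerTheorem} after first replacing the given tolerance $\varepsilon$ by $\widetilde{\varepsilon}(p):=\min\{\varepsilon(p),\,2r\}$, which is still a continuous positive function. Inspecting the proof of Theorem \ref{PalaisFinslerTheorem}, the final pointwise bound on the derivative is of the form $\|dg(p)\|_p < \widetilde{\varepsilon}(p)/4 + C_0 K^2 L$ (the factor $2$ in the theorem's statement arises from the coarser choice $\widetilde{\varepsilon}\le C_0 K^2 L$, but this choice is not essential). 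With our parameters this yields
\begin{equation*}
\|dg(p)\|_p < \frac{2r}{4} + K^2 L \le \frac{r}{2} + L + \frac{r}{2} = L + r,
\end{equation*}
and by Proposition \ref{mean:value}(i) we conclude $\Lip(g) \le L + r = \Lip(f) + r$, while $|g(p)-f(p)|<\widetilde{\varepsilon}(p)\le\varepsilon(p)$ for every $p\in M$.

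The main obstacle is that Theorem \ref{PalaisFinslerTheorem} as stated only gives $\Lip(g)\le 2C_0 K^2 \Lip(f)$, which is not fine enough: even with $C_0=1$ and $K\downarrow 1$ this would only give $\Lip(g)\le 2\Lip(f)$. One therefore needs to revisit the last estimate in its proof and replace the reduction $\varepsilon(p)<C_0 K^2 L$ by $\varepsilon(p)<2r$, using that $r$ rather than $C_0K^2 L$ controls the acceptable slack. Apart from this refinement the argument is a direct instantiation of the theorem, with Remark \ref{remark}(2)(5) supplying the Palais/weak-uniform structure and the Hilbertian Lasry--Lions estimate supplying $C_0=1$.
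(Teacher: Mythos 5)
Your proposal is correct and is essentially the derivation the paper intends (the corollary is stated without proof as a consequence of Theorem \ref{PalaisFinslerTheorem}): you correctly combine Remark \ref{remark}(2),(\ref{remark-k}) to get $K$-weak-uniformity for every $K>1$, the Lasry--Lions fact that $C_0=1$ uniformly over the Hilbertian norms $|||\cdot|||_\gamma$ actually used in the proof, and the observation that the factor $2$ in $2C_0K^2$ comes only from the dispensable normalization $\varepsilon(p)<C_0K^2L$, which you rightly replace by $\widetilde\varepsilon(p)\le 2r$ to get $\|dg(p)\|_p\le r/2+K^2L\le L+r$. No gaps.
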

%%%%
Let us notice that for separable Riemannian manifolds,  the  Lipschitz function $g$ that approximates $f$ can be obtained to be
$C^\infty$-smooth (see \cite{AzFeMeRa}). Unfortunately, in the non-separable case we  can only ensure that $g$ is $C^1$-smooth.
%%%
\begin{cor} \label{bumpRiemann}
Every Riemannian manifold is $C^1$-uniformly bumpable.
\end{cor}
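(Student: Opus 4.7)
The plan is to deduce this corollary as a direct specialization of Corollary \ref{approximation:uniformly}(1) (with $k=1$) applied to the Riemannian setting. The ingredients are all already in place in the excerpt, so the argument reduces to checking that the hypotheses of Corollary \ref{approximation:uniformly}(1) are met by an arbitrary (not necessarily separable) Riemannian manifold $M$, and recording what ``the constant $C_0$ does not depend on the norm'' means in the Hilbertian case.

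First, by Remark \ref{remark}(2), a Riemannian manifold $M$ is a $C^\infty$ Finsler manifold in the sense of Palais, and by Remark \ref{remark}(\ref{remark-k}) this implies that $M$ is $K$-weak-uniform for every $K>1$; pick any such $K$ (say $K=2$). The model space of $M$ is a Hilbert space $H$. Next, by the second item of the remark following the definition of property $(*^k)$, $H$ admits property $(*^1)$, and the Lasry--Lions inf--sup-convolution construction gives a constant $C_0$ that can be taken as $1$ for \emph{every} Hilbertian norm on $H$.

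The point now is that the class of equivalent norms on $H$ that intervenes in the proof of Theorem \ref{PalaisFinslerTheorem} (and hence of Corollary \ref{approximation:uniformly}) is precisely the family $|||\cdot|||_\gamma = \|d\varphi_\gamma^{-1}(0)(\cdot)\|_{p_\gamma}$, which are the pullbacks under charts of the Riemannian inner-product norms on the tangent spaces $T_{p_\gamma}M$. Each of these is therefore a Hilbertian norm on $H$, so property $(*^1)$ applies to each with the \emph{same} constant $C_0 = 1$. This is exactly the sense in which ``$C_0$ does not depend on the norm'' required by the hypothesis of Corollary \ref{approximation:uniformly}(1).

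With these verifications in hand, Corollary \ref{approximation:uniformly}(1) applied with $\ell=\infty$ and $k=1$ yields that $M$ is $C^{\min\{\infty,1\}} = C^1$-uniformly bumpable, which is the desired conclusion. The only subtle step is the uniformity remark in the previous paragraph; everything else is a direct quotation of results already proved. No further calculation is needed.
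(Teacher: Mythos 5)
Your proof is correct and follows exactly the route the paper intends: the corollary is stated without explicit proof precisely because it is the specialization of Corollary \ref{approximation:uniformly}(1) to the Riemannian case, using Remark \ref{remark}(2) and (\ref{remark-k}) for the $K$-weak-uniform hypothesis and the Hilbert-space case of property $(*^1)$ with $C_0=1$. Your observation that the only norms entering the proof of Theorem \ref{PalaisFinslerTheorem} are the pullbacks $|||\cdot|||_\gamma$ of the tangent-space inner-product norms, hence Hilbertian, is exactly the sense of norm-independence the paper relies on (as it indicates in Section 1 and Remark 3.2(2)), so nothing is missing.
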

%%%
Let $M$ be a Riemannian manifold. Let us denote by  $C_b^1(M)$ the algebra of all bounded, Lipschitz and $C^1$-smooth functions $f:M\to\Real$.   It is easy to check  that  $C_b^1(M)$ is a Banach space endowed with the norm
$||f||_{C_b^1(M)}:=\max\{||f||_\infty,||df||_\infty\}$ (where $||f||_\infty=\sup_{p\in M}|f(p)|$ and $||df||_\infty=\sup_{p\in M}||df(p)||_p$). Moreover,  it is a {\em Banach algebra} with the norm $2||\cdot||_{C_b^1(M)}$ (see \cite{GaJaRa}). Recall that two {\em normed algebras} $(A,||\cdot||_A)$ and $(B,||\cdot||_B)$   are said to be {\em equivalent as normed algebras} whenever there exists an {\em algebra isomorphism} $T:A\to B$  such that $||T(a)||_B=||a||_A$ for every $a\in A$. Also, the Riemannian manifolds $M$ and $N$ are said to be {\em equivalent} whenever there is a {\em Riemannian isometry}
 $h:M\to N$, i.e. $h$ is  a $C^1$-diffeomorphism from $M$ onto $N$ satisfying
\begin{equation*}
\langle dh(x)(v),dh(x)(w)\rangle _{h(x)}=\langle v,w \rangle _x
\end{equation*}
for every $x\in M$ and every $v,wÊ\in T_x M$ (where $\langle \cdot, \cdot \rangle_p$ is the escalar product defined in $T_pM$).
Garrido,  Jaramillo and  Rangel proved  in \cite{GaJaRa}  a version of the Myers-Nakai theorem (see \cite{Myers}, \cite{Nakai})
for infinite-dimensional Riemannian manifolds under the assumption that the Riemannian manifold is $C^1$-uniformly bumpable.
Therefore, from  \cite{GaJaRa} and Corollary \ref{bumpRiemann}, we can deduce the following
assertion.
%%%%%%%%%%%%%%%%%%%%%%%%
\begin{cor}
Let $M$ and $N$ be complete Riemannian manifolds. Then $M$ and $N$ are equivalent Riemannian manifolds if, and only if, $C_b^1(M)$ and $C_b^1(N)$ are equivalent as normed algebras. Moreover, every normed algebra isomorphism $T:C_b^1(N)\to C_b^1(M)$ is of the form $T(f)=f\circ h$, where $h:M\to N$ is a Riemannian isometry.
\end{cor}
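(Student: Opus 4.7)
The plan is to obtain this corollary essentially as a direct combination of two ingredients: the infinite-dimensional Myers–Nakai theorem of Garrido–Jaramillo–Rangel, and the $C^1$-uniform bumpability of every (possibly non-separable) Riemannian manifold provided by Corollary \ref{bumpRiemann}. So I would present it as a short deduction rather than a standalone proof.

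First I would dispatch the easy implication. If $h:M\to N$ is a Riemannian isometry, then $h$ is in particular a $C^1$-diffeomorphism that preserves the Finsler norms on tangent spaces, hence it preserves the associated Finsler distance. Consequently $T_h:C_b^1(N)\to C_b^1(M)$ defined by $T_h(f)=f\circ h$ is well defined, is an algebra homomorphism, and the identities $||T_h(f)||_\infty=||f||_\infty$ and $||d(f\circ h)(x)||_x=||df(h(x))||_{h(x)}$ (coming from $||dh(x)(v)||_{h(x)}=||v||_x$ for all $v\in T_xM$) give $||T_h(f)||_{C_b^1(M)}=||f||_{C_b^1(N)}$. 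Thus $T_h$ is a normed algebra equivalence.

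For the nontrivial direction and the description of every normed algebra isomorphism, I would invoke the version of the Myers–Nakai theorem established in \cite{GaJaRa}. That result proves exactly the statement we want, under the sole structural hypothesis that both Riemannian manifolds involved are $C^1$-uniformly bumpable (in the sense of Definition \ref{bumpable}). Since $M$ and $N$ are assumed to be complete Riemannian manifolds, Corollary \ref{bumpRiemann} (which is the new input established here, and which in turn rests on Theorem \ref{PalaisFinslerTheorem} applied to the fact that every Hilbert space satisfies property $(*^1)$ with constant $C_0$ independent of the equivalent Hilbertian norm) guarantees that both $M$ and $N$ are $C^1$-uniformly bumpable. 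Therefore the hypotheses of the Garrido–Jaramillo–Rangel theorem are fulfilled, and its conclusion gives both that any normed algebra equivalence between $C_b^1(M)$ and $C_b^1(N)$ forces a Riemannian isometry $h:M\to N$, and that every such equivalence must be the composition operator $T(f)=f\circ h$.

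The only conceptual obstacle is simply to verify that the uniform-bumpability hypothesis of \cite{GaJaRa} is actually available in the non-separable setting, since that was precisely the missing ingredient preventing the earlier separable statement from extending; once Corollary \ref{bumpRiemann} is in hand, no further work is needed and the corollary is immediate. For clarity I would explicitly write the two-line argument: ``By Corollary \ref{bumpRiemann}, both $M$ and $N$ are $C^1$-uniformly bumpable; the result now follows from \cite[Theorem ...]{GaJaRa}.''
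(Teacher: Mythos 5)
Your proposal is correct and follows exactly the paper's route: the corollary is deduced immediately from the Myers--Nakai theorem of Garrido--Jaramillo--Rangel for $C^1$-uniformly bumpable Riemannian manifolds together with Corollary \ref{bumpRiemann}, which supplies the bumpability hypothesis in the (possibly non-separable) complete case. Your additional explicit check of the easy direction is fine but not needed beyond what the cited theorem already covers.
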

%%%%%%%%%%%%%%%%%%%%%%%%%%%

A version  for uniformly bumpable complete Riemannian manifolds of the {\em Deville-Godefroy-Zizler smooth variational principle}
\cite{Deville} (DGZ smooth variational principle, for short) was proved  in \cite{AzFeMe}. Thus, from \cite{AzFeMe} and Corollary \ref{bumpRiemann} we deduce  the following corollary.
Recall that a function $f:M \to \Real\cup\{\infty\}$ {\em attains its strong minimum} on $M$ at $x\in M$ if $f(x)=\inf\{f(z):z\in M\}$ and $d_M(x_n,x)\to 0$ whenever $\{x_n\}_{n=1}^\infty$ is a sequence of points of $M$ such that  $f(x_n)\to f(x)$. A function
$f:M\to \Real\cup\{\infty\}$ is said to be proper whether $f\neq \infty$.

\begin{cor}(DGZ smooth variational principle for Riemannian manifolds).
Let $M$ be a complete Riemannian manifold  and let $f:M\to \Real\cup\{\infty\}$ be a lower semicontinuous (lsc) function which is bounded below and proper. Then, for each $\varepsilon>0$ there is a bounded $C^1$-smooth and Lipschitz function $\varphi:M\to\Real$ such that
\begin{enumerate}
\item $f-\varphi$ attains its strong minimum on $M$,
\item $||\varphi||_\infty <\varepsilon$ and $||d\varphi||_\infty<\varepsilon$.
\end{enumerate}
\end{cor}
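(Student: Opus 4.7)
The plan is a direct two-step deduction: combine Corollary \ref{bumpRiemann}, which asserts that \emph{every} Riemannian manifold (separable or not) is $C^1$-uniformly bumpable, with the Deville--Godefroy--Zizler variational principle for $C^1$-uniformly bumpable complete Riemannian manifolds established in \cite{AzFeMe}. Since our $M$ is a complete Riemannian manifold, Corollary \ref{bumpRiemann} supplies constants $R>1$ and $r>0$ as in Definition \ref{bumpable}, which is exactly the hypothesis needed to invoke the variational principle from \cite{AzFeMe}.

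For the reader's convenience I would sketch how $C^1$-uniform bumpability feeds into the classical DGZ argument. Consider the Banach space $Y:=(C_b^1(M), \|\cdot\|_{C_b^1(M)})$, which is complete because $M$ is complete and the norm dominates the sup norm. Without loss of generality assume $\|\cdot\|_Y \le 1/\varepsilon$ after rescaling, so that membership in the unit ball of $Y$ gives functions with $\|\varphi\|_\infty < \varepsilon$ and $\|d\varphi\|_\infty < \varepsilon$. For each $n \in \mathbb N$ define
\begin{equation*}
U_n := \bigl\{ \varphi \in Y : \exists\, x_0 \in M \text{ with } (f-\varphi)(x_0) < \inf\{(f-\varphi)(x) : d_M(x,x_0) \ge 1/n\}\bigr\}.
\end{equation*}
Each $U_n$ is open (because small perturbations in $Y$ preserve the strict inequality using lower semicontinuity and boundedness-below of $f$), and $C^1$-uniform bumpability is used precisely to show $U_n$ is dense: given any $\psi \in Y$ and any $\delta>0$, pick $x_0 \in M$ nearly minimising $f-\psi$, then use a rescaled uniform $C^1$-smooth bump $b$ centred at $x_0$ with support in $B_M(x_0,1/n)$ and $\|db\|_\infty \le R n$ to form $\psi + \lambda b$ for a small, appropriately signed $\lambda$; this produces a new function in $U_n$ and within $\delta$ of $\psi$. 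By the Baire category theorem, $\bigcap_n U_n$ is dense in $Y$, and any $\varphi$ in this intersection gives a strong minimum of $f-\varphi$ by a Cauchy-type argument applied to a sequence of almost-minimisers of $f-\varphi$ at each scale $1/n$.

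The main (and essentially only) obstacle is checking that the DGZ argument in \cite{AzFeMe}, originally written for separable Riemannian manifolds, does not use separability anywhere: completeness enters through the Baire property of $Y$, and smoothness/Lipschitz control of the approximating perturbations comes exclusively from $C^1$-uniform bumpability. Both ingredients are available in our setting by Corollary \ref{bumpRiemann} and by the completeness of $M$, so the conclusion carries over verbatim, yielding a bounded $C^1$-smooth Lipschitz function $\varphi$ with $\|\varphi\|_\infty < \varepsilon$, $\|d\varphi\|_\infty < \varepsilon$, and such that $f-\varphi$ attains its strong minimum on $M$.
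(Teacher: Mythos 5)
Your proposal is correct and follows exactly the paper's route: the corollary is obtained by combining Corollary \ref{bumpRiemann} (every Riemannian manifold is $C^1$-uniformly bumpable) with the DGZ smooth variational principle for uniformly bumpable complete Riemannian manifolds proved in \cite{AzFeMe}. The additional sketch of the Baire-category argument is more detail than the paper supplies but is consistent with it; note only that the result in \cite{AzFeMe} is already stated under the hypothesis of uniform bumpability rather than separability, so no re-examination of that proof is actually needed.
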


%%%%%%%%%%%%%%%%%%%%%%%%%%%%
%%%%%%%%%%%%%%%%%%%%%%%%%%%%%

Actually,  following  the proof given for Riemannian manifolds  \cite{AzFeMe}, we can extend this variational principle to the class of  $C^1$  Finsler manifolds in the sense of Neeb-Upmeier weak-uniform, provided they are $C^1$-uniformly bumpable. Let us indicate that, as in the previous Sections 2 and 3, the use of the norms $|||\cdot|||_\gamma$ are required  in order to prove the result.
%%%
\begin{cor} \label{DGZfinslermanifolds} (DGZ smooth variational principle for Finsler manifolds).
Let $M$ be a complete and $C^1$-uniformly bumpable $C^1$ Finsler manifold in the sense of Neeb-Upmeier  $K$-weak-uniform, and let $f:M\to \Real\cup\{\infty\}$ be a lsc function which is bounded below and proper. Then, for each $\varepsilon>0$ there is a bounded, Lipschitz and $C^1$-smooth function $\varphi:M\to\Real$ such that:
\begin{enumerate}
\item $f-\varphi$ attains its strong minimum on $M$,
\item $||\varphi||_\infty <\varepsilon$ and $||d\varphi||_\infty<\varepsilon$.
\end{enumerate}
\end{cor}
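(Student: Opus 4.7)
The plan is to carry out the classical Baire-category proof of the Deville--Godefroy--Zizler variational principle in the form used for Riemannian manifolds in \cite{AzFeMe}, verifying that every step survives in the $K$-weak-uniform Neeb--Upmeier setting. Let $\mathcal Y$ denote the space of bounded, Lipschitz, $C^1$-smooth functions $\varphi:M\to\Real$ equipped with the norm
\[
\|\varphi\|_{\mathcal Y}:=\max\{\|\varphi\|_\infty,\,\|d\varphi\|_\infty\},\qquad \|d\varphi\|_\infty:=\sup_{p\in M}\|d\varphi(p)\|_p.
\]
For each $n\in\mathbb N$ introduce
\[
U_n:=\bigl\{\varphi\in\mathcal Y:\exists\,x_0\in M,\,\eta>0\text{ with }(f-\varphi)(y)\ge(f-\varphi)(x_0)+\eta\text{ whenever }d_M(x_0,y)\ge 1/n\bigr\}.
\]
Once each $U_n$ is open and dense in $\mathcal Y$, Baire's theorem applied to the open ball $\{\varphi:\|\varphi\|_{\mathcal Y}<\varepsilon\}$ yields some $\varphi\in\bigcap_n U_n$ satisfying condition (2). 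For such $\varphi$, any infimizing sequence $\{y_k\}$ of $f-\varphi$ is eventually trapped in $B_M(x_0^{(n)},1/n)$ for every $n$, hence is $d_M$-Cauchy; completeness of $M$ and lower semicontinuity of $f$ then produce the strong minimum in (1).

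The first delicate point is to show that $\mathcal Y$ is a Banach space, and this is precisely where the $K$-weak-uniform structure is needed. A $\|\cdot\|_{\mathcal Y}$-Cauchy sequence $\{\varphi_j\}$ converges uniformly to some bounded $\varphi:M\to\Real$, so I need $\varphi\in C^1$ with $d\varphi_j\to d\varphi$ uniformly in the Finsler sense. Working in a chart $(U,\varphi_\gamma)$ about an arbitrary $p_\gamma\in M$ satisfying \eqref{ineq-K-weak-uniform}, that inequality makes the Finsler norms $\|\cdot\|_x$ on $T_xM$ for $x\in U$ all $K$-equivalent---via pullback through $d\varphi_\gamma(x)$---to the fixed Banach norm $|||\cdot|||_\gamma=\|d\varphi_\gamma^{-1}(0)(\cdot)\|_{p_\gamma}$ on $X$. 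Consequently $\{d(\varphi_j\circ\varphi_\gamma^{-1})\}$ is Cauchy in the $|||\cdot|||_\gamma$-operator sup norm over $\varphi_\gamma(U)\subset X$; the standard Banach-space result then gives $\varphi\circ\varphi_\gamma^{-1}\in C^1$ with the expected derivative, and Proposition \ref{mean:value}(i) transfers the derivative bound into the Lipschitz bound, placing $\varphi\in\mathcal Y$.

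Openness of $U_n$ is a routine perturbation: if $\varphi\in U_n$ with witness $(x_0,\eta)$, every $\psi$ with $\|\psi-\varphi\|_\infty<\eta/4$ lies in $U_n$ with witness $(x_0,\eta/2)$. The substantive step is density of $U_n$, where the $C^1$-uniform bumpability enters. Given $\varphi_0\in\mathcal Y$ and $\varepsilon'>0$, fix $\alpha=1/n$ (choosing $n$ large enough that $\alpha<r$, for the $r,R$ from Definition \ref{bumpable}), select small $\eta>0$, and pick $x_0\in M$ with $(f-\varphi_0)(x_0)<\inf(f-\varphi_0)+\eta$. Let $b:M\to[0,1]$ be the $C^1$-smooth bump supplied by uniform bumpability at $x_0$ with radius $\alpha$: $b(x_0)=1$, $b\equiv 0$ off $B_M(x_0,\alpha)$, and $\|db\|_\infty\le R/\alpha$. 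Setting $\psi:=\varphi_0+2\eta\,b$ yields $(f-\psi)(x_0)\le\inf(f-\varphi_0)-\eta$, while $(f-\psi)(y)=(f-\varphi_0)(y)\ge\inf(f-\varphi_0)$ for $d_M(x_0,y)\ge\alpha$; so $\psi\in U_n$ with gap $\eta$. Finally $\|\psi-\varphi_0\|_{\mathcal Y}\le 2\eta\max\{1,R/\alpha\}<\varepsilon'$ for $\eta$ small enough.

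The main obstacle is not the Baire machinery---which is formal once $\mathcal Y$ is known to be Banach---but handling the movable Finsler norms $\|\cdot\|_x$ uniformly on chart neighborhoods. This is exactly what $K$-weak-uniformity (combined with the bi-Lipschitz chart estimate of Lemma \ref{desigualdades:BiLipschitz}(1)) provides, and as the authors remark after the statement, the pullback norms $|||\cdot|||_\gamma$ are the device that transfers estimates uniformly across the chart, used in precisely the same spirit as in the proofs of Theorems \ref{PalaisFinslerTheorem} and \ref{FinslerTheorem}.
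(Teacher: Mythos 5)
Your proposal is correct and takes essentially the same approach as the paper, which gives no detailed proof but states that one should follow the Riemannian argument of \cite{AzFeMe} using the pullback norms $|||\cdot|||_\gamma$ --- precisely the Baire-category scheme you carry out. Your reconstruction (completeness of the space of bounded Lipschitz $C^1$ functions via the $K$-weak-uniform charts, openness by perturbation, density of the sets $U_n$ via $C^1$-uniform bumpability) supplies exactly the details the authors indicate.
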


%%%%%%%

\begin{rem}
It is worth noting that  if a $C^1$ Finsler manifold $M$  in the sense of Neeb-Upmeier  satisfies the \emph{DGZ smooth variational principle}, then it  is necessarily modeled on a Banach space $X$ with  a $C^1$-smooth and Lipschitz bump function. Indeed, by the DGZ smooth variational principle, there exists a $C^1$-smooth and Lipschitz function $\phi:M\to \Real$ such that $g=1-\phi$ attains its strong minimum at $x_0\in M$, $||\phi||_\infty < \frac{1}{4}$ and $||d\phi||_\infty <\frac{1}{4}$. Since $x_0$ is the strong minimum of $g$ on $M$, for every $\delta>0$ there exists $a>0$ such that $g(y)\ge a +g(x_0)$ for every $y\in M$ with $d_M(y,x_0)>\delta$ $\,(**)$. Let us take $\varphi:U\to X$ a chart with $x_0\in U$  satisfying  inequality \eqref{ineq-Neeb-Upmeier} with the constant
$K_{x_0}\ge 1$. Let us choose  $\delta>0$ such that $B_M(x_0,2\delta)\subset U$, $\varphi(B_M(x_0,2\delta))$ is bounded in $X$, and the corresponding constant $a>0$ satisfies the above condition $(**)$ for $\delta$. Let $\theta:\Real\to \Real$ be  a $C^\infty$-smooth and Lipschitz function  with $\theta(t)=1$ for every $t\le g(x_0)$, $\theta(t)=0$ for every $t\ge g(x_0) +a$ and $\Lip(\theta)\le 2/a$. Let us define $b:X\to \Real$ by $b(x)=\theta(g(\varphi^{-1}(x)))$  for every $x\in \varphi(B_M(x_0,2\delta))$, and $b(x)=0$  whenever $x\not\in  \varphi(B_M(x_0,2\delta))$. Then, it is clear that $b$ is a $C^1$-smooth bump function on
$X$ and $\Lip(b)\le \frac{K_{x_0}}{2a}$.
\end{rem}

 %%%

\section{Uniformly Bumpable and Smooth Approximation}
%%%%%%%%%%%%%%%%%%%%%%%%%%%%
In this section we establish a characterization of the class of separable  smooth Finsler manifolds in the sense of Neeb-Upmeier which are uniformly bumpable as those separable smooth Finsler manifolds in the sense of Neeb-Upmeier admitting approximation of Lipschitz functions by Lipschitz and smooth functions.
%%%%%%%%%%%%%%%%%%%%%%

The notion of  \emph{smooth sup-partitions of unity} on Banach spaces was introduced by R. Fry \cite{Fry1} to solve the problem of approximation of real-valued, bounded and Lipschitz functions defined on a Banach space with separable dual
 by $C^1$-smooth and Lipschitz functions. Subsequent generalizations of this result and related results were given in \cite{azafrymon} %\cite{Azafrykeener}
  and \cite{HajekJohanis}, by means of the existence of smooth sup-partitions of unity on the Banach space. This concept can be considered  in the context of Finsler manifolds as well.

%%%

\begin{defn}
Let $M$ be a  $C^\ell$ Finsler manifold in the sense of Neeb-Upmeier. $M$ admits \textbf{$C^k$-smooth and Lipschitz sup-partitions of unity}  subordinated to an open cover  $\mathcal{U}=\{U_r\}_{r\in \Omega}$ of $M$, if there is a collection of $C^k$-smooth and $L$-Lipschitz functions $\{\psi_\alpha\}_{\alpha\in\Gamma}$ (where $L>0$ depends on $M$ and the cover $\mathcal{U}$)
such that
\begin{enumerate}
\item[(S1)] $\psi_\alpha:M\to[0,1]$ for all $\alpha\in \Gamma$,
\item[(S2)] for each $x\in M$ the set $\{\alpha\in\Gamma : \psi_\alpha(x)>0\}\in c_0(\Gamma)$,
\item[(S3)] $\{\psi_\alpha\}_{\alpha\in \Gamma}$ is subordinated to $\mathcal{U}=\{U_r\}_{r\in \Omega}$, i.e. for
 each $\alpha\in \Gamma$ there is $r\in \Omega$ such that $\supp (\psi_\alpha)\subset  U_{r}$, and
\item[(S4)] for each $x\in M$ there is $\alpha\in\Gamma$ such that $\psi_\alpha(x)=1$.
\end{enumerate}
\end{defn}

%%%

D. Azagra, R. Fry and A. Montesinos proved that  every separable Banach space with a Lipschitz and $C^k$-smooth bump function admits $C^k$-smooth sup-partitions of unity \cite{Fry1} and \cite{azafrymon}. Following their proof, it can be stated
 the existence of $C^k$-smooth sup-partitions of unity on separable, $C^k$ Finsler manifolds that are $C^k$-smooth uniformly bumpable.

%%%

%\begin{defn} A Finsler manifold is $C^k$-uniformly bumpable whether it is a uniformly bumpable Finsler manifold and in
%addition, the bump functions considered in Definition \ref{bumpable} can be chosen to be  $C^k$-smooth.
%\end{defn}

%%%

\begin{thm}\label{equiv}
Let $M$ be a separable $C^\ell$ Finsler manifold  in the sense of Neeb-Upmeier. The following conditions are equivalent:
\begin{enumerate}
\item[(1)] $M$ is $C^k$-uniformly bumpable ($k\le \ell$).
\item[(2)] There is $C_2\ge 1$ (which only depends on $M$) such that for every  Lipschitz function $f:G\to\Real$ defined on an open subset of $M$ and every $\varepsilon>0$, there exists a Lipschitz and $C^k$-smooth function $g:M\to\Real$ such that $|f(x)-g(x)|<\varepsilon$ for every $x\in G$, $||dg(x)||_x \le C_2\Lip(f)$  for all $x\in M$, and thus $\Lip(g)\le C_2\Lip(f)$.
\end{enumerate}
\end{thm}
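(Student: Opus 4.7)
The plan is to prove the two implications separately; direction $(2) \Rightarrow (1)$ is a bump-from-approximation argument essentially identical to Corollary \ref{approximation:uniformly}. Fix any $r > 0$ (for instance $r := 1$), and for $p \in M$ and $\delta \in (0,r)$ consider the cone $f : M \to [0,1]$ defined by $f(q) := \max\{1 - d_M(q,p)/\delta,\, 0\}$, which is $(1/\delta)$-Lipschitz with $f(p) = 1$ and $f \equiv 0$ outside $B_M(p,\delta)$. Applying hypothesis (2) with $G := M$ and $\varepsilon := 1/4$ yields a $C^k$-smooth Lipschitz $g : M \to \Real$ with $|g(q) - f(q)| < 1/4$ and $\|dg(q)\|_q \le C_2/\delta$ for every $q \in M$. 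Composing with a $C^\infty$ cutoff $\theta : \Real \to [0,1]$ satisfying $\theta(t) = 0$ for $t \le 1/4$, $\theta(t) = 1$ for $t \ge 3/4$ and $\Lip(\theta) \le 3$, the function $b := \theta \circ g$ satisfies the three conditions of Definition \ref{bumpable} with constant $R := \max\{3C_2, 2\}$.

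The substantive direction is $(1) \Rightarrow (2)$. Here I would invoke the existence of $C^k$-smooth and Lipschitz sup-partitions of unity on $M$, available on separable, $C^k$-smooth uniformly bumpable Finsler manifolds by adapting the Azagra--Fry--Montesinos construction, exactly as indicated in the paragraph preceding the theorem. The approximation is then built by the Fry/Azagra--Fry--Montesinos scheme: first, extend $f : G \to \Real$ to a Lipschitz function $F : M \to \Real$ with $\Lip(F) = \Lip(f)$ via the inf-convolution $F(x) := \inf_{y \in G}\{f(y) + \Lip(f)\, d_M(x,y)\}$. Next, for each scale $n \in \mathbb{N}$ choose an open cover of $M$ by balls small enough that $F$ oscillates on each ball by less than a controlled error of order $\varepsilon/2^n$; take a $C^k$-smooth and Lipschitz sup-partition $\{\psi_{n,\alpha}\}_\alpha$ subordinated to this cover, and assemble $g$ via the suitable Fry-type sup-formula using the values of $F$ at chosen centers of the balls together with the functions $\psi_{n,\alpha}$.

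The main technical hurdle is the uniform Lipschitz estimate on $g$: the individual $\psi_{n,\alpha}$ typically have Lipschitz constants of order $2^n$, so a naive summation would blow up. The key feature of a sup-partition, condition (S2), is that for each $x \in M$ the family $\{\psi_{n,\alpha}(x)\}$ belongs to $c_0(\Gamma)$, so the Fry-type \emph{smooth sup} structure (as opposed to a summation of the partition) allows the $2^n$ growth factors to be absorbed against the $\varepsilon/2^n$ oscillation control of $F$ on the covers at scale $n$. The outcome is a $C^k$-smooth $g$ with $\|dg(x)\|_x \le C_2 \Lip(f)$ for every $x \in M$, where $C_2$ depends only on $M$ (through the uniform bumpability constant $R$ and the Lipschitz constant $L$ of the associated sup-partitions) and neither on $G$, $f$, nor $\varepsilon$. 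Combined with the mean value inequality of Proposition \ref{mean:value}(i), this yields $\Lip(g) \le C_2 \Lip(f)$, completing the proof.
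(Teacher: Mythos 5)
Your direction $(2)\Rightarrow(1)$ is correct and coincides with the paper's: it is the bump-from-approximation argument of Corollary \ref{approximation:uniformly}, and your constants check out.

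For $(1)\Rightarrow(2)$ you follow the paper's route in outline (sup-partitions of unity obtained from uniform bumpability, then a Fry-type construction), and the McShane extension of $f$ from $G$ to all of $M$ is a correct way to handle the open subset. However, there is a genuine gap in how you assemble and estimate the approximant. The Fry sup-formula $g(x)=\|\{F(p_n)\psi_n(x)\}\|/\|\{\psi_n(x)\}\|$ is applied at a \emph{single} scale $\delta\sim 1/\Lip(F)$, and all members of that sup-partition carry the \emph{same} Lipschitz bound of order $R/\delta$; there is no family of scales $n$ with partition functions of Lipschitz constant $2^n$ compensated by $\varepsilon/2^n$ oscillation bounds. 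The compensation mechanism you describe is the one used in the ordinary (summed) partition-of-unity proof of Theorem \ref{PalaisFinslerTheorem}, via Rudin's refinements, and it does not transplant to the sup-partition scheme: the sup-formula requires the function being approximated to be bounded (in effect bounded away from $0$ after renormalization), so at one scale it only yields a \emph{fixed} accuracy, say $1/4$, for $[0,1]$-valued Lipschitz functions --- this is exactly the paper's Step 2. The passage from that fixed-accuracy approximation of bounded functions to arbitrary $\varepsilon>0$ and arbitrary real-valued Lipschitz $f$, with a constant $C_2\le 3C_1$ still independent of $f$ and $\varepsilon$, is a separate and nontrivial gluing/iteration argument, which the paper imports from \cite[Proposition 1 and Theorem 3]{HajekJohanis} or \cite[Lemma 1]{Azafrykeener}. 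Your proposal neither supplies this step nor identifies it; the uniform bound $\|dg(x)\|_x\le C_2\Lip(f)$ that you assert at the end is precisely what that missing step is needed to establish.
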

\noindent {\em Sketch of the Proof.}
The proof of  $(2)\Rightarrow (1)$ follows along the same lines as the proof of Corollary \ref{approximation:uniformly}.
The proof of the converse  is analogous to the   Banach space case   \cite{Fry1, azafrymon, HajekJohanis}
with some modifications. Let us sketch the steps of the proof for the readers convenience.

\medskip

\noindent {\em  Step 1. There is $r'>0$  such that for every $\delta<r'$ there exists a $C^k$-smooth and Lipschitz sup-partition of unity subordinated to the open cover $\{B_M(p,\delta)\}_{p\in M}$ of $M$.}
Let us mention the necessary modifications to be made in \cite{Fry1, azafrymon}
to prove this assertion.  Let us fix an equivalent $C^\infty$-smooth norm $||\cdot||$ on $c_0$, such that $||\cdot||\le ||\cdot||_\infty \le A ||\cdot||$ (for some constant
$A>1$). Since $M$ is $C^k$-uniformly bumpable, there are $r>0$ and $R>1$ such that for every point $p\in M$ and $\delta\in (0,r')$ (where $r':=\min\{r,\frac{1}{A}\}$) we can obtain  two families of $C^k$-smooth functions, $\{b_p\}_{p\in M}$  and $\{\widetilde{b}_p\}_{p\in M}$, where $b_p, \widetilde{b}_p:M\rightarrow \mathbb [0,1]$, such that
%%%
\begin{itemize}
\item[(1)] $b_p(p)=1$, $\widetilde{b}_p(p)=1$,
\item[(2)] $b_p(x)=0$ whenever $d_M(x,p)\ge \delta$, $\widetilde{b}_p(x)=0$ whenever $d_M(x,p)\ge \delta/2R$, and
\item[(3)] $\Lip(b_p)\le \sup_{x\in M} ||d b_p(x)||_x \le R/\delta$, and $\Lip(\widetilde{b}_p)\le \sup_{x\in M} ||d \widetilde{b}_p(x)||_x \le 2R^2/\delta$.
\end{itemize}
%%%
Now, by composing $\{b_p\}_{p\in M}$  and $\{\widetilde{b}_p\}_{p\in M}$ with suitable real functions, we obtain $C^k$-smooth and Lipschitz functions $f_p,g_p:M\to [0,1]$  such that $f_p(x)=0$ whenever $d_M(x,p)\le \delta/2R$, $f_p(x)=1$ whenever $d_M(x,p)\ge\delta$, $g_p(x)=1$ whenever $d_M(x,p)\le \delta/4R^2$, $g_p(x)=0$ whenever $d_M(x,p)\ge \delta/2R$,
$\Lip(f_p)\le \sup\{||d f_p(x)||_x:x\in M\}\le 3R/\delta$ and  $\Lip(g_p)\le \sup\{||dg_p(x)||_x:x\in M\}\le 6R^2/\delta$.

Now, by imitating the construction given in \cite{azafrymon, Fry1}, we obtain a  countable $C^k$-smooth and Lipschitz sup-partition of unity subordinated to $\{B_M(p,\delta)\}_{p\in M}$ with Lipschitz constant  bounded above by $15 \frac{R}{\delta} (1+2AR)$.

\medskip

\noindent {\em Step 2. There is $C_1\ge1$, that only depends on $M$, such that for every Lipschitz function $f:M\to [0,1]$ with $\Lip(f)\ge 1$, there exists a Lipschitz and $C^k$-smooth function $g:M\to \Real$ such that $|f(x)-g(x)|<1/4$ for every $x\in M$, and $\Lip(g)\le \sup\{||dg(x)||_x:x\in M\}\le C_1 \Lip(f)$.} Notice that $r':=\min\{r, \frac{1}{A}\}>0$ depends only on $M$ and $A\ge 1$ (which is an independent constant). Let us take a constant
$B > 4$, which only depends on $M$, satisfying $\frac{1}{BA}<r'$. Now, if $\Lip(f):=L\ge 1$, let us define
$\delta:=\frac{1}{BAL}< r'$. By Step 1, there exists a $C^k$-smooth and Lipschitz sup-partition of unity
$\{\varphi_n\}_{n=1}^\infty$ subordinated to $\{B_M(p,\delta)\}_{p\in M}$ such that $\Lip(\varphi_n)\le \sup\{||d\varphi_n(x)||_x:x\in M\}Ê\le 15R(1+2AR) BAL=CL$, where $C:=15R(1+2AR)BA$.  Again, by imitating the proof
of \cite{Fry1, azafrymon} , it can be checked that the function $g:M\rightarrow \mathbb R$,
\begin{equation*}
g(x):=\frac{||\{f(p_n)\varphi_n(x)\}||}{||\{\varphi_n(x)\}||} \qquad x\in M,
\end{equation*}
is $C^k$-smooth, $||dg(x)||_x\le C_1L$ for every $x\in M$, and thus it is $C_1L$-Lipschitz,  where $C_1:=2A^2C$ ($C_1$ only depends on  $M$) and   $|g(x)-f(x)|<1/4$, for every $x\in M$.

%\medskip

%\noindent {\em Fact.
%For every $S\ge 1$, there exists a constant $C_2\ge 1$, which only depends  on $M$ such that for every subset $A\subset M$ %there is a $C_2S$-Lipschitz and $C^k$-smooth function $h_A:M\to \Real$ such that $h_A(x)=1$ on $A$ and $h_A(x)=0$ %whenever $d_M(x,A)\ge 1/S$.}

%\medskip

%\noindent {\em Step 3.} Thus, following the proof given in \cite[Proposition 1]{HajekJohanis}, for every Lipschitz function $f:M\to%\Real$ with $\Lip(f)=L\ge 1$,
%let us define the subsets $A_n:=\{x\in M: n\le  f(x)\}$, for every  $n\in \mathbb{Z}$. Clearly,  $A_{n+1}\subset A_n$ and $d_M(M%\setminus A_n, A_{n+1})\ge 1/L$, for every $n\in \mathbb{Z}$.

%From the above Fact follows that the functions $h_n:=h_{A_{n+1}}$ are $C^k$-smooth and $C_2L$-Lipschitz. Moreover,  $h_n%(x)=1$, whenever $x\in A_{n+1}$ and $h_n(x)=0$, whenever $x\in M\setminus A_{n}$.
%Let us define $h:M\rightarrow \mathbb R$,
%\begin{equation*}
%h(x):=\sum_{n=0}^\infty h_n(x) - \sum_{n=-\infty}^1 (1-h_n(x)).
%\end{equation*}
%Then, it can be checked that $h$ is well defined,  $C^k$-smooth,  $C_2L$-Lipschitz and $|f(x)-h(x)|\le 1$ for every  $x\in M$.

\medskip

 {\em Step 3.} Either   \cite[Proposition 1 and Theorem 3]{HajekJohanis} or \cite[Lemma 1]{Azafrykeener}
 provides the final step to ensure the existence of a constant $C_2\le 3C_1$ ($C_2$ only depends on $M$) such that every  real-valued and Lipschitz function
 $f:M\rightarrow \mathbb R$ can be uniformly approximated by a  $C^k$ smooth function $g$ such that $\Lip(g)\le \sup\{||dg(x)||_x:x\in M\}\le C_2\Lip(f)$.
 The proofs in \cite{HajekJohanis} and  \cite{Azafrykeener} are given
 for Banach spaces, but they also work for a $C^\ell$ Finsler manifold provided  Step 2 holds.

%Finally, if $f:M\to\Real$ a  Lipschitz function and $\varepsilon>0$
 %with  $\Lip(f)=L\ge\varepsilon$, Step 3 applies to  $\widetilde{f}(x):=\frac{1}{\varepsilon}f(x)$, which  is
 %$\frac{L}{\varepsilon}$-Lipschitz with $\frac{L}{\varepsilon}\ge 1$.
%Then, by Step 3, there is a Lipschitz and $C^k$-smooth  function $\widetilde{g}:M\to\Real$ such that
%$\Lip(\widetilde{g})\le C_2\frac{L}{\varepsilon}$ and $|\widetilde{f}(x)-\widetilde{g}(x)|<1$ for every $x\in M$.
%Thus, the function $g(x):=\varepsilon \widetilde{g}(x)$  is   $C^k$ smooth, $\Lip(g)\le C_2L$ and $|f(x)-g(x)|<\varepsilon$,
%for every $x\in M$. $\qed$

\medskip

%%%%%%%%%%%%%%%%%%%%%%%%%%%
\begin{rem}
Recall that if $M$ is a $C^{\ell}$ Finsler manifold (separable or non-separable) in the sense of Neeb-Upmeier modeled on a Banach space $X$, then condition (2) in Theorem \ref{equiv} yields to the fact that $X$ has property $(*^k)$. A proof of this fact can be obtained by applying the techniques of N. Moulis \cite{Moulis}, P. H\'ajek and M. Johanis \cite{HajekJohanis}.
Unfortunately, the results given in Section \ref{3} require additional assumptions  on the manifold $M$ to prove the converse, i. e. to prove that if  $X$ has property $(*^k)$, then $M$ satisfies condition (2) in Theorem \ref{equiv}.
\end{rem}
%%%%%%%%%%%%%%%%%%%%%%%%%%%

\section{Smooth and Lipschitz extensions}

%%%%%%%%%%%%%%%%%%%%%%%%%%

%\begin{defn}
%Let $M$ be a manifold modeled on a Banach space $X$. We say that $N\subset M$ is a closed $C^1$ submanifold if for every %$p\in N$ there is a chart $(V_p,\varphi_p)$ of $M$ at $p$ (where $\varphi_p:V_p\to X$ is a $C^1$-diffeomorphism and bi-
%Lipschitz)   such that $\varphi_p(V_p\cap N)=Y$, where $Y$ is a closed subspace of $X$.
%\end{defn}

%\begin{thm}(R. S. Palais, \cite[Theorem 3.6]{Palais})
%If $M$ is a Finsler manifold and $N$ is a $C^1$ submanifold of $M$ then $||\cdot||_%{\mid_{TN}}$ is a Finsler structure for $N$.
%\end{thm}

Recall that D. Azagra, R. Fry and L. Keener proved in  \cite{Azafrykeener} that if  $X$ is a Banach space with separable dual $X^*$, there exists a constant $C>0$ (that  only depends  on $X$) such that for every closed subspace $Y\subset X$ and every $C^1$-smooth and Lipschitz function $f:Y\rightarrow \mathbb R$, there is a $C^1$-smooth and Lipschitz extension $F:X\rightarrow \mathbb R$ (i.e. $F(y)=f(y)$, for all $y\in Y$) with  $\Lip(F)\le C \Lip(f)$. Later on, a generalization of this result for  the class of Banach spaces with property $(*^1)$ was given by the authors in \cite{MarLuis}.
The above result  also holds in the case of a  $C^1$-smooth and Lipschitz function  $f:D\cap Y
\rightarrow \mathbb R$, where $D$ is a convex, closed subset of $X$, whenever there is an open  subset $U$ of $X$ with $D\subset U\subset X$ such that  $f:D\cap Y\rightarrow \mathbb R$ is Lipschitz and $C^1$-smooth (as a function on $Y$).
%%%
\begin{rem}
An examination of the constant $C$ obtained in  \cite{Azafrykeener} and \cite{MarLuis} yields to the fact that this
 can be taken as $C:= \frac{9}{4}+66C_0$, where $C_0$ is the constant given by property $(*^1)$.
 Therefore, if $X$ has property $(*^1)$ and the constant $C_0$ does not depend on the (equivalent) norm considered in $X$,
 then the constant $C$  does not depend on the (equivalent) norm considered in $X$ either.
\end{rem}
%%%
In this section we shall give a smooth extension result  on a certain class of  $C^1$ Finsler manifolds $M$ for $C^1$-smooth
functions $f:N\rightarrow \mathbb R$ defined on a submanifold $N$,
whenever $f$ is Lipschitz  with respect to the Finsler metric of the manifold $M$.

Let us consider a $C^1$ manifold $M$ modeled on a Banach space $X$.
First, let us give the definition of submanifold. A subset  $N\subset M$ is a {\em $C^1$ submanifold } of $M$ if for every $p\in N$ there is a chart
$(V_p,\varphi_p)$ of $M$ at $p$, such that $p\in V_p$  and $\varphi_p(V_p\cap N)=A\cap Y$, where $Y$ is a closed subspace of $X$ and $A$ is an open subset of $X$ with $\varphi_p(p)\in A\cap Y$.
Notice that we do not require in this definition that $Y$  is complemented in $X$. Thus, this definition of submanifold
is more general than the one considered in some texts for Banach manifolds modeled on infinite dimensional Banach spaces.
%\begin{thm}(R. S. Palais, \cite[Theorem 3.6]{Palais})
Recall that if $M$ is a $C^1$ Finsler manifold and $N$ is a $C^1$ submanifold of $M$, then $||\cdot||_{\mid_{TN}}$ is a Finsler structure for $N$ \cite[Theorem 3.6]{Palais}.
Let us begin with the following Lemma.
%%%%%%%%%%%%%%%%%%%%
%%%%%%%%%%%%%%%%%%%%

\begin{lem}\label{lemma:extension:approximation}
Let $X$ be a Banach space with  property $(*^1)$ and let $Y\subset X$ be a closed subspace of $X$. Let   $D \subset X$ be
a closed, convex subset of $X$, $A\subset X$ an open subset of $X$ such that $D\subset A$ and  $f:A\cap Y\to\Real$
a $C^1$-smooth and Lipschitz function (as a function on  $Y$). Let us consider $\varepsilon>0$ and a  Lipschitz extension   of $f_{\mid_{D\cap Y}}$ to $X$, which we shall denote by $F:X\to\Real$  (i.e. $F:X\to\Real$ is Lipschitz and $F(y)=f(y)$, for all $y\in D\cap Y$). Then,  there exists a $C^1$-smooth and Lipschitz function $G:X\to\Real$ such that
%%%%%%
\begin{enumerate}
\item[(i)] $G_{\mid_{D\cap Y}}=f_{\mid_{D\cap Y}}$,
\item[(ii)] $|G(x)-F(x)|<\varepsilon$ for all $x \in X$, and
\item[(iii)] $\Lip(G)\leq R (\Lip(F)+\Lip(f))$,
\end{enumerate}
%%%%%%%
where   $R:=\frac{29}{2}C_0(\frac{9}{4}+66C_0) $  is a constant that depends only on $(X, ||\cdot||)$.
\end{lem}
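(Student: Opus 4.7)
\noindent \emph{Sketch of proof.}
The plan is a two-stage smoothing. First, I would use property $(*^1)$ to approximate $F$ uniformly by a $C^1$-smooth Lipschitz function $K$; then I would build a $C^1$-smooth corrective term $\tilde L$ so that $G:=K-\tilde L$ agrees exactly with $f$ on $D\cap Y$, while $\tilde L$ remains uniformly small, so that $G$ inherits both the closeness to $F$ and the Lipschitz estimate. Concretely, applying property $(*^1)$ to $F$ with tolerance $\varepsilon/3$ yields a $C^1$-smooth and Lipschitz $K:X\to\Real$ with $|K(x)-F(x)|<\varepsilon/3$ for every $x\in X$ and $\Lip(K)\le C_0\Lip(F)$.

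Next, the difference $K-f$ is $C^1$-smooth and Lipschitz on the open subset $A\cap Y$ of $Y$, with $\Lip(K-f)\le C_0\Lip(F)+\Lip(f)$. Invoking the Azagra--Fry--Keener extension theorem in the generalized form for Banach spaces with property $(*^1)$ established in \cite{Azafrykeener,MarLuis}, applied to $(K-f)_{|D\cap Y}$ (which is licit because $D\cap Y\subset A\cap Y$ with $D$ closed and convex and $K-f$ of the required type on $A\cap Y$), I obtain a $C^1$-smooth and Lipschitz $L:X\to\Real$ with $L_{|D\cap Y}=(K-f)_{|D\cap Y}$ and $\Lip(L)\le\bigl(\tfrac{9}{4}+66C_0\bigr)\bigl(C_0\Lip(F)+\Lip(f)\bigr)$. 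Since on $D\cap Y$ we have $|L|=|K-f|=|K-F|<\varepsilon/3$, I would then truncate: fix a $C^1$-smooth, $1$-Lipschitz function $\theta:\Real\to\Real$ with $\theta(t)=t$ for $|t|\le\varepsilon/3$ and $|\theta(t)|\le 2\varepsilon/3$ for every $t\in\Real$, and set $\tilde L:=\theta\circ L$. Then $\tilde L_{|D\cap Y}=L_{|D\cap Y}$, $|\tilde L|\le 2\varepsilon/3$ globally, and $\Lip(\tilde L)\le\Lip(L)$.

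Setting $G:=K-\tilde L$ then gives a $C^1$-smooth Lipschitz function on $X$ with $G_{|D\cap Y}=K-(K-f)=f$, which yields (i); with $|G-F|\le|K-F|+|\tilde L|<\varepsilon/3+2\varepsilon/3=\varepsilon$, which yields (ii); and with
\begin{equation*}
\Lip(G)\le\Lip(K)+\Lip(\tilde L)\le C_0\Lip(F)+\Bigl(\tfrac{9}{4}+66C_0\Bigr)\bigl(C_0\Lip(F)+\Lip(f)\bigr),
\end{equation*}
which a routine estimate (using $C_0\ge 1$) bounds by $R(\Lip(F)+\Lip(f))$ with $R=\frac{29}{2}C_0\bigl(\frac{9}{4}+66C_0\bigr)$, giving (iii). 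The principal difficulty lies in reconciling (i) and (ii): the extension $L$ provided by Azagra--Fry--Keener is Lipschitz-controlled but not a priori small away from $D\cap Y$, so subtracting it naively from $K$ would destroy the approximation of $F$. The truncation step is the essential trick: it exploits the fact that $|L|$ is automatically small on $D\cap Y$ (because $K\approx F=f$ there) to cap $L$ uniformly without altering its values on $D\cap Y$.
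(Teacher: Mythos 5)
Your proof is correct, but it combines the two black boxes (property $(*^1)$ and the Azagra--Fry--Keener/\cite{MarLuis} extension theorem) in a genuinely different way from the paper. The paper extends $f_{\mid_{D\cap Y}}$ itself to a $C^1$-smooth Lipschitz $g$ on $X$ with $\Lip(g)\le C\Lip(f)$, approximates $F$ by $h$ via $(*^1)$, and then glues: $G=ug+(1-u)h$, where $u$ is a $C^1$-smooth Urysohn-type function equal to $1$ on $\{|g-F|\le\varepsilon/4\}\supset D\cap Y$ and vanishing off $\{|g-F|<\varepsilon/2\}$, with $\Lip(u)$ of order $\varepsilon^{-1}(\Lip(F)+C\Lip(f))$; the Lipschitz bound for $G$ then comes from rewriting the derivative of the gluing term as $(g-F)\,du+(h-F)\,d(1-u)$ so that the smallness of $g-F$ and $h-F$ on $\supp(du)$ cancels the $\varepsilon^{-1}$ in $\Lip(u)$. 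You instead approximate first ($K\approx F$), apply the extension theorem to the \emph{error} $(K-f)_{\mid_{D\cap Y}}$, and tame the resulting extension $L$ by composing with a scalar $C^1$ truncation $\theta$ that is the identity where $|L|<\varepsilon/3$ (in particular on $D\cap Y$, where $L=K-F$ is automatically small) and uniformly bounded by $2\varepsilon/3$; then $G=K-\theta\circ L$. What your route buys is the elimination of the smooth Urysohn function $u$ -- whose construction with controlled Lipschitz constant is itself nontrivial and requires a further application of $(*^1)$ (cf.\ \cite[Lemma 2.3]{MarLuis}) -- in favour of a one-dimensional cutoff, at no cost in the constant: your bound $C_0(1+C)\Lip(F)+C\Lip(f)$ with $C=\frac94+66C_0$ sits comfortably below $R(\Lip(F)+\Lip(f))$ with $R=\frac{29}{2}C_0C$. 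The existence of your $\theta$ ($1$-Lipschitz, $C^1$, identity on $[-\varepsilon/3,\varepsilon/3]$, bounded by $2\varepsilon/3$) is routine, e.g.\ by integrating a continuous even profile that equals $1$ on $[-\varepsilon/3,\varepsilon/3]$ and decays linearly to $0$ with total extra mass $\varepsilon/3$ on each side.
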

%%%%%%%
\begin{proof}
Since $X$ admits property $(*^1)$, from the results in \cite{Azafrykeener} and \cite{MarLuis}, we know that there exists a Lipschitz and $C^1$-smooth extension $g:X\to\Real$ of
$f_{\mid_{D\cap Y}}$ to $X$ such that $g_{\mid_{D\cap Y}}=f_{\mid_{D\cap Y}}$ and $\Lip(g)\le C\Lip(f)$, where  $C:= \frac{9}{4}+66C_0$
depends only on $(X, ||\cdot||)$ (and $C_0$ is the constant given by property $(*^1)$). Also, since $X$ admits  property $(*^1)$, there is a $C^1$-smooth and Lipschitz function $h:X\to\Real$ such that $|h(x)-F(x)|<\varepsilon$ for $x\in X$ and $\Lip(h)\le C_0 \Lip(F)$. Consider the sets $E=\{x\in X: |g(x)-F(x)|<\varepsilon/4\}$, $I=\{x\in X: |g(x)-F(x)|\le \varepsilon/4\}$ and $B=\{x\in X: |g(x)-F(x)|<\varepsilon/2\}$  in $X$. Then $D\cap Y\subset E\subset I \subset B$.

As in the proof of \cite[Lemma 2.3]{MarLuis}, let us consider a $C^1$-smooth and Lipschitz function $u:X\to[0,1]$ such that $u(x)=1$ whenever $x\in I$, $u(x)=0$ whenever $x\in X\setminus B$ and $\Lip(u)\le \frac{9C_0(\Lip(F)+C\Lip(f))}{\varepsilon}$.

%%%
Now, let us define $G:X\rightarrow \mathbb R$,
%%%
\begin{equation*}
G(x)=u(x)g(x)+(1-u(x))h(x),  \qquad  x\in X.
\end{equation*}
%%%
 Clearly, the function $G$ is  $C^1$-smooth and $G(y)=f(y)$ for $y\in D\cap Y$. If  $x\in X\setminus B$, then
 $|G(x)-F(x)|=|h(x)-F(x)|<\varepsilon$. If $x\in B$, then we have  $|G(x)-F(x)|\le u(x)|g(x)-F(x)|+(1-u(x))|h(x)-F(x)|<\varepsilon$.
Let us prove that $G$ is Lipschitz on $X$.
%%%
\begin{itemize}
\item[(i)] If $x\in X\setminus \overline{B}$, then $||G'(x)||=||h'(x)||\le C_0 \Lip(F)$;
\item[(ii)] if $x\in \overline{B}$, then
%%%
\begin{align*}
||G'(x)||&\le ||g(x)u'(x)+h(x)(1-u)'(x)||+||g'(x)u(x)+h'(x)(1-u(x))||\le \\
&\le ||(g(x)-F(x))u'(x)+(h(x)-F(x))(1-u)'(x)||+ (C\Lip(f)+C_0\Lip(F))\le \\
&\le  \frac{29}{2}C_0C(\Lip(f)+\Lip(F)).
\end{align*}
%%%
\end{itemize}
Now, let us define $R:=\frac{29}{2}C_0C=\frac{29}{2}C_0(\frac{9}{4}+66C_0)$, which yields to $\Lip(G)\le R(\Lip(F)+\Lip(f))$.

\end{proof}
%%%%%%%%%%%%%%%%
%%%%%%%%%%%%%%%

\begin{prop}\label{ext}
Let $M$ be a $C^1$ Finsler manifold  in the sense of Neeb-Upmeier $K$-weak-uniform such that it is  modeled on a Banach space $X$ that admits property $(*^1)$ and the constant $C_0$ does not depend on the norm.
Let $N\subset M$ be a closed $C^1$ submanifold and let  $f:N\to\mathbb{R}$ be a $C^1$-smooth function. If $f$ is Lipschitz  as a function on $M$ (i.e., there is $L\ge 0$ such that $|f(p)-f(q)|\le Ld_M(p,q)$, for every $p,q\in N$), then there is a $C^1$-smooth and Lipschitz extension $g:M\to\Real$ such that $\Lip(g)\leq S \Lip(f)$, where $S:=\frac{1}{2}+2RK^2$ and $R$ is the constant given in Lemma \ref{lemma:extension:approximation}.
\end{prop}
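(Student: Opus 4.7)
The strategy is a ``chart plus partition of unity'' gluing analogous to Theorem~\ref{PalaisFinslerTheorem}, but in each chart we apply Lemma~\ref{lemma:extension:approximation} rather than property $(*^1)$ alone, so that the local approximants actually \emph{coincide} with $f$ on $N$ instead of merely approximating it. Let $L:=\Lip(f)$.

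The first step is to use the McShane formula
\begin{equation*}
F(x):=\inf_{y\in N}\bigl(f(y)+L\,d_M(x,y)\bigr)
\end{equation*}
to obtain a Lipschitz extension $F:M\to\Real$ with $F|_N=f$ and $\Lip(F)=L$. For each $p\in M$, pick a chart $(U_p,\varphi_p)$ with $\varphi_p(p)=0$ satisfying \eqref{ineq-K-weak-uniform} and the bi-Lipschitz estimate \eqref{B-L1}: if $p\in N$, take a submanifold chart, so that $\varphi_p(U_p\cap N)=\varphi_p(U_p)\cap Y_p$ for a closed subspace $Y_p\subset X$; if $p\in M\setminus N$, use closedness of $N$ to ensure $U_p\cap N=\emptyset$. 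Set $|||\cdot|||_p:=||d\varphi_p^{-1}(0)(\cdot)||_p$. Shrinking each $U_p$ if necessary, choose $\delta_p>0$ so small that $B_M(p,2K^2\delta_p)\subset U_p$; then the closed convex set $D_p:=\overline{B}_{|||\cdot|||_p}(0,2K\delta_p)$ is contained in $\varphi_p(U_p)$. Apply Lemma~\ref{Rudin} together with the construction in Lemma~\ref{partition1} to the open cover $\{B_M(p,\delta_p)\}_{p\in M}$ to obtain a locally finite $C^1$-smooth Lipschitz partition of unity $\{\psi_{n,\gamma}\}_{n\in\mathbb N,\gamma\in\Gamma}$ with $\supp\psi_{n,\gamma}\subset W_{n,\gamma}\subset B_M(p_\gamma,2\delta_\gamma)$; by the bi-Lipschitz estimate, $\varphi_\gamma(W_{n,\gamma})\subset D_\gamma$. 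Write $L_{n,\gamma}:=\max\{1,\sup_q||d\psi_{n,\gamma}(q)||_q\}$.

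For each $\gamma$, let $f_\gamma:=f\circ\varphi_\gamma^{-1}$ (defined on $\varphi_\gamma(U_\gamma)\cap Y_\gamma$ when $p_\gamma\in N$), and McShane-extend $F\circ\varphi_\gamma^{-1}$ with respect to $|||\cdot|||_\gamma$ to a Lipschitz map $\tilde F_\gamma:X\to\Real$. The bi-Lipschitz property of $\varphi_\gamma$ yields $\Lip_{|||\cdot|||_\gamma}(\tilde F_\gamma)\le KL$ and, when $p_\gamma\in N$, $\Lip_{|||\cdot|||_\gamma}(f_\gamma)\le KL$; moreover $\tilde F_\gamma|_{D_\gamma\cap Y_\gamma}=f_\gamma|_{D_\gamma\cap Y_\gamma}$ because $F|_N=f$. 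When $p_\gamma\in N$, apply Lemma~\ref{lemma:extension:approximation} with $D=D_\gamma$, $Y=Y_\gamma$, $F=\tilde F_\gamma$, and approximation parameter $\varepsilon_{n,\gamma}:=L/(2^{n+2}L_{n,\gamma})$, obtaining a $C^1$-smooth function $G_{n,\gamma}:X\to\Real$ with $G_{n,\gamma}=f_\gamma$ on $D_\gamma\cap Y_\gamma$, $|G_{n,\gamma}(x)-\tilde F_\gamma(x)|<\varepsilon_{n,\gamma}$ for all $x\in X$, and
\begin{equation*}
\Lip_{|||\cdot|||_\gamma}(G_{n,\gamma})\le R\bigl(\Lip_{|||\cdot|||_\gamma}(\tilde F_\gamma)+\Lip_{|||\cdot|||_\gamma}(f_\gamma)\bigr)\le 2RKL.
\end{equation*}
When $p_\gamma\notin N$, apply property $(*^1)$ directly to $\tilde F_\gamma$ to produce a $G_{n,\gamma}$ satisfying an analogous Lipschitz bound. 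Norm-independence of $C_0$ is what makes the same constant $R$ work uniformly in $\gamma$. Finally define
\begin{equation*}
g(q):=\sum_{n\in\mathbb N,\gamma\in\Gamma}\psi_{n,\gamma}(q)\,G_{n,\gamma}(\varphi_\gamma(q)),\qquad q\in M,
\end{equation*}
with the convention that a summand is $0$ wherever $\psi_{n,\gamma}$ is; local finiteness makes $g$ well-defined and $C^1$-smooth on $M$.

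To check $g|_N=f$: for $q\in N$ only indices with $p_\gamma\in N$ can contribute (the other charts avoid $N$), and for such $\gamma$ one has $\varphi_\gamma(q)\in\varphi_\gamma(W_{n,\gamma})\cap Y_\gamma\subset D_\gamma\cap Y_\gamma$, so $G_{n,\gamma}(\varphi_\gamma(q))=f_\gamma(\varphi_\gamma(q))=f(q)$, and summing with $\sum\psi_{n,\gamma}(q)=1$ gives $g(q)=f(q)$. For the Lipschitz bound, differentiate and use $\sum d\psi_{n,\gamma}(q)=0$ to write
\begin{equation*}
dg(q)=\sum_{(n,\gamma)\in F_q}\bigl(G_{n,\gamma}(\varphi_\gamma(q))-F(q)\bigr)\,d\psi_{n,\gamma}(q)+\sum_{(n,\gamma)\in F_q}\psi_{n,\gamma}(q)\,d(G_{n,\gamma}\circ\varphi_\gamma)(q),
\end{equation*}
where $F_q$ is the finite index set with $\psi_{n,\gamma}(q)>0$. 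Since $\tilde F_\gamma(\varphi_\gamma(q))=F(q)$ on $U_\gamma$, the first sum has norm $\le\sum_n\varepsilon_{n,\gamma_q(n)}L_{n,\gamma_q(n)}\le L/2$. For the second, \eqref{ineq-K-weak-uniform} gives $|||d\varphi_\gamma(q)(v)|||_\gamma\le K||v||_q$ for $v\in T_qM$, whence $||d(G_{n,\gamma}\circ\varphi_\gamma)(q)||_q\le\Lip_{|||\cdot|||_\gamma}(G_{n,\gamma})\cdot K\le 2RK^2L$, so the second sum is bounded by $2RK^2L$. Thus $||dg(q)||_q\le(\tfrac12+2RK^2)L=SL$ and Proposition~\ref{mean:value}(i) yields $\Lip(g)\le SL$. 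The main obstacle is the simultaneous compatibility of charts, refinements and convex sets: arranging $\varphi_\gamma(W_{n,\gamma})\subset D_\gamma\subset\varphi_\gamma(U_\gamma)$ for every $\gamma\in\Gamma$ (so that $g\equiv f$ on $N$) while uniformly controlling every Lipschitz constant across the varying norms $|||\cdot|||_\gamma$; it is precisely the norm-independence of $C_0$ that keeps the constant $R$ from Lemma~\ref{lemma:extension:approximation} the same in every chart.
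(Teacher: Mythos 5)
Your proof is correct and follows essentially the same route as the paper's: McShane-extend $f$ to $F$ on $M$, work in bi-Lipschitz submanifold charts near $N$ (and charts avoiding the closed set $N$ elsewhere), apply Lemma \ref{lemma:extension:approximation} locally on closed convex sets containing the chart images of the $W_{n,\gamma}$ (plain property $(*^1)$ away from $N$), and glue with the Lipschitz partition of unity of Lemma \ref{partition1}, estimating $||dg(q)||_q$ exactly as in Theorem \ref{PalaisFinslerTheorem}. The only cosmetic differences are that you take the convex sets to be closed $|||\cdot|||_\gamma$-balls rather than chart images $\varphi_\gamma(\overline{V}_\gamma)$ chosen convex, and that you explicitly McShane-extend $F\circ\varphi_\gamma^{-1}$ to all of $X$ before invoking the lemma, which the paper leaves implicit.
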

%%%%%%%%%%%%%%%
\begin{proof}
First, let us extend $f$ to $M$ as  $F(x):=\inf_{y\in N} \{f(y)+Ld_M(x,y)\}$, for every $x\in M$, where $L=\Lip(f)=
\sup\{\frac{|f(p)-f(q)|}{d_M(p,q)}:\, p,q\in N, p\not=q\}$. The function $F$ is a Lipschitz extension of $f$ to $M$, with the same Lipschitz constant  $\Lip(F)=\Lip(f)=L$.

Let us take a family of charts  $\{(O_\gamma,\varphi_\gamma)\}_{\gamma\in \Gamma_1\cup \Gamma_2}$, a set of points $\{p_\gamma\}_{\gamma \in \Gamma_1\cup \Gamma_2}$ on $M$  and a family of open subsets $\{V_\gamma\}_{\gamma \in \Gamma_1}$ on $M$ satisfying:
%%%%
\begin{enumerate}
\item $\overline{V_\gamma}\subset O_\gamma$, for all $\gamma \in \Gamma_1$,
\item $p_\gamma \in O_{\gamma}$ for all $\gamma \in \Gamma:=\Gamma_1 \cup \Gamma_2$ and $p_\gamma \in V_\gamma$ for all $\gamma\in \Gamma_1$,
\item  $N\subset \bigcup_{\gamma \in \Gamma_1} V_\gamma$ and
$M\setminus N=\bigcup_{\gamma \in \Gamma_2} O_\gamma $,
\item $\varphi_\gamma(O_\gamma):=A_\gamma\subset X$ and the sets $A_\gamma$ are open  subsets of $X$, for all $\gamma \in \Gamma_1 \cup \Gamma_2$,
\item  $\varphi_\gamma(\overline{V}_\gamma):=C_\gamma\subset X$ and the sets  $C_\gamma$ are  closed, {\em convex} subsets of $X$, for all $\gamma \in \Gamma_1$,
\item $\varphi_\gamma:O_\gamma\rightarrow A_\gamma$ are $C^1$-diffeomorphisms  such that
\begin{center}
$|||d\varphi_\gamma(p)|||_\gamma:=\sup\{|||d\varphi_\gamma(p)(v)|||_\gamma: \, ||v||_p=1\}\le 2K$ {for all } $p\in O_\gamma,$
\end{center}
where $|||w|||_\gamma:=||d\varphi^{-1}_{\gamma}(\varphi_{\gamma}(p_\gamma))(w)||_{p_\gamma}$ for every $w\in X$. Moreover, we can assume that $\varphi_\gamma:O_\gamma\rightarrow A_\gamma$ is $2K$-bi-Lipschitz with the norm $|||\cdot|||_{\gamma}$ in $X$, for all $\gamma \in \Gamma_1 \cup \Gamma_2$ (notice that we are using Remark \ref{remark}(\ref{remark-R}) for $R=2K$ and Lemma \ref{desigualdades:BiLipschitz}).
\item Since  $N$ is a  $C^1$ submanifold of $M$  (modeled on a closed subspace $Y$ of $X$), we may have selected the charts so that
 $\varphi_\gamma(O_\gamma\cap N)=A_\gamma\cap Y$ and $\varphi_\gamma(\overline{V}_\gamma\cap N)=\varphi_\gamma(\overline{V}_\gamma) \cap Y=C_\gamma \cap Y$ for all $\gamma \in \Gamma_1$.
\end{enumerate}
%%%%
By Lemma \ref{partition1}, there is an open refinement $\{W_{n,\gamma}\}_{n\in \mathbb N,\gamma \in \Gamma}$ of $\{V_\gamma \}_{\gamma\in \Gamma_1}\cup \{O_\gamma\}_{\gamma \in \Gamma_2}$ satisfying  properties $(i)-(iv)$ of Lemma \ref{Rudin}, and there is
a $C^1$-smooth and Lipschitz partition of unity of $M$
$\{\psi_{n,\gamma}\}_{n\in \mathbb N,\gamma\in \Gamma}$   such that   $\supp(\psi_{n,\gamma})\subset W_{n,\gamma}\subset O_\gamma$ for every $n\in\mathbb{N}$ and $\gamma\in \Gamma_2$ and  $\supp(\psi_{n,\gamma})\subset W_{n,\gamma}\subset V_\gamma$  for every $n\in\mathbb{N}$ and $\gamma\in \Gamma_1$. Let us write $L_{n,\gamma}:=
\max\{1, \sup\{||d\psi_{n,\gamma}(x)||_x:x\in M\}\}$ for every $n\in\mathbb{N}$ and $\gamma\in \Gamma$, and let us define
 $f_\gamma:A_\gamma\cap Y\to \Real$ (for all $\gamma\in \Gamma_1$) and $F_\gamma:A_\gamma\to\Real$
 (for all $\gamma\in \Gamma$) as
%%%
\begin{equation*}
f_\gamma(y):=f\circ \varphi^{-1}_\gamma(y)\ \text{ and }\ F_\gamma(x):=F\circ \varphi^{-1}_\gamma(x),
\end{equation*}
%%%
for every $y\in A_\gamma\cap Y$ and $x\in A_\gamma$.
The functions $f_\gamma$ and $F_\gamma$ are $KL$-Lipschitz with the norm
$|||\cdot|||_\gamma$ in $X$  and $F_\gamma$ is a Lipschitz extension of ${f_{\gamma}}_{\mid_{C_\gamma\cap Y}}$
to $A_\gamma$.
Since $X$ admits property $(*^1)$ (with the same constant $C_0$, for every equivalent norm), we can apply Lemma \ref{lemma:extension:approximation} to obtain  $C^1$-smooth and Lipschitz functions $G_{n,\gamma}:X\to\Real$, for all
$n \in \mathbb N$ and $\gamma \in \Gamma_1$, such that
%%%
\begin{itemize}
\item[(a)] ${G_{n,\gamma}}_{\mid_{C_\gamma\cap Y}}=f_\gamma$,
\item[(b)] $|G_{n,\gamma}(x)-F_\gamma(x)|< \frac{L}{2^{n+1} L_{n,\gamma}}$ for every $x\in A_\gamma$, and
\item[(c)] $\Lip(G_{n,\gamma})\le 2R K L$, for the norm $|||\cdot|||_\gamma$ on $X$, where $R$ is the constant given in Lemma \ref{lemma:extension:approximation}.
\end{itemize}
%%%
In addition, since $X$ has  property  $(*^1)$, we obtain for every $n \in \mathbb N$ and $\gamma\in \Gamma_2$,  $C^1$-smooth and Lipschitz functions $G_{n,\gamma}:X\to\Real$ satisfying the conditions (b) and (c) (notice that $C_0\le R$).

Now, let us define $g:M\to\Real$ by
\begin{equation*}
g(x):=\sum_{n\in \mathbb N,\gamma\in \Gamma} \psi_{n,\gamma}(x)G_{n,\gamma}(\varphi_\gamma(x)).
\end{equation*}
Since $\supp(\psi_{n,\gamma }) \subset W_{n,\gamma}\subset O_\gamma$ for every $(n,\gamma)\in   \mathbb N\times
 \Gamma$, and $\{\psi_{n,\gamma}\}_{n\in \mathbb N,
\gamma \in \Gamma}$ is a $C^1$-smooth partition of unity of $M$, the function $g$ is well defined and  $C^1$-smooth on $M$.
Now, if  $y\in N$ and  $\psi_{n,\gamma}(y)\neq 0$, then $y\in V_\gamma\cap N$ and $\varphi_\gamma(y)\in C_\gamma\cap Y$. Therefore,  $G_{n,\gamma}(\varphi_\gamma(y))=f(y)$ and $g(y)=\sum_{n\in \mathbb N,\gamma \in \Gamma}\psi_{n,\gamma}(y)f(y)=f(y)$. Let us prove that $g$ is Lipschitz on $M$. Recall that  $dg(x)=\sum_{n\in \mathbb N,\gamma \in \Gamma}d\psi_{n,\gamma}(x)=0$ for all $x\in M$, and thus  $dg(x)=\sum_{n\in \mathbb N,\gamma \in \Gamma}d\psi_{n,\gamma}(x)F(x)=0$. In addition, if  $x\in M$ and $d\psi_{n,\gamma}(x)\neq 0$, then $F(x)=F(\varphi_\gamma^{-1}(\varphi_\gamma(x)))$. Therefore,
%%%%%
\begin{eqnarray*}
&||dg(x)||_x&\le ||\sum_{n\in \mathbb N,\gamma \in \Gamma} G_{n,\gamma}(\varphi_\gamma(x))\,d\psi_{n,\gamma}(x)||_x
+ \\
& & \qquad \qquad \qquad \qquad \qquad  \qquad +||\sum_{n\in \mathbb N,\gamma \in \Gamma}\psi_{n,\gamma}(x)\,dG_{n,\gamma}(\varphi_\gamma(x))\,d\varphi_\gamma(x)||_x \le\\
&&\le \sum_{n\in \mathbb N,\gamma \in \Gamma}||d\psi_{n,\gamma}(x)||_x|G_{n,\gamma}(\varphi_\gamma(x))-F(x)|+
\sum_{n\in \mathbb N,\gamma \in \Gamma}\psi_{n,\gamma}(x) 2R K^2L\le \\
&& \le \sum_{n\in \mathbb N}  L_{n,\gamma(n)}\frac{L}{2^{n+1} L_{n,\gamma(n)}}+ 2R K^2 L\le\frac{ L}{2}+2RK^2L.
\end{eqnarray*}
%%%%%%
Thus, if we   define $S:=\frac{1}{2}+2RK^2$, it follows from Proposition \ref{mean:value} that  $\Lip(g)\le \sup\{||dg(x)||_x:x\in M\}\le S\Lip(f)$ and  the constant $S$ only depends on $M$.

 \end{proof}

 Unfortunately, we do not know if the conclusion of the Proposition \ref{ext} holds if we drop the assumption that the Banach space $X$ admits the property $(*^1)$ with the same constant $C_0$ for every (equivalent) norm.

% The proof of Proposition \ref{ext} under the assumption
% (2) follows from the fact given in Remark \ref{remark}(\ref{remark-k}):
%  if  $M$ is $K$-uniform, then $M$ is $K^2$-weak-uniform.

 %Let us briefly point out that,
%in this case,  the norm considered in $X$ throughout the proof  is
 %$||\cdot||$ (instead of $|||\cdot|||_\gamma$).

%%%%%%%%%%%%%%%%%%%%%%%%%%%%
%%%%%%%%%%%%%%%%%%%%%%

\section*{Acknowledgments}
The authors wish to thank Jes\'us Jaramillo for many helpful discussions.

%%%%%%%%%%%%%%%%%%%%%%%%%%%
%%%%%%%%%%%%%%%%%%%%%%%%%%%
%%%%%%%%%%%%%%%%%%%%%%%%%%%

%%%%%%%%%%%%%%%%%%%%%%%

\end{document}